\def\BState{\State\hskip-\ALG@thistlm}
\numberwithin{equation}{section}
\newtheorem{theorem}{Theorem}[section]
\newtheorem{cor}[theorem]{Corollary}
\newtheorem{lem}[theorem]{Lemma}
\newtheorem{prop}[theorem]{Proposition}
\newtheorem{defn}[theorem]{Definition}
\newtheorem{rem}[theorem]{Remark}
\newtheorem{ex}[theorem]{Example}
\newenvironment{itemizeReduced}{
\begin{list}{\labelitemi}{\leftmargin=1em}
\setlength{\itemsep}{1pt}
\setlength{\parskip}{0pt}
\setlength{\parsep}{0pt}}{\end{list}
}
\newcommand{\bigzero}{\mbox{\normalfont\Large\bfseries 0}}
\title{Distance Matrix of a  Class of Completely Positive Graphs: Determinant and Inverse}
\author{Joyentanuj Das$^*$,  \quad Sachindranath Jayaraman$^*$ \quad and \quad Sumit Mohanty\footnote{School of Mathematics, IISER Thiruvananthapuram, Maruthamala P.O., Vithura, 
Thiruvananthapuram,\newline \indent   Kerala- 695 551, India.
 \newline \indent Emails:
joyentanuj16@iisertvm.ac.in, \; sachindranathj@gmail.com, sachindranathj@iisertvm.ac.in \; sumit@iisertvm.ac.in,  sumitmath@gmail.com }}
\date{}
\begin{document}

\maketitle

\begin{abstract}
A real symmetric matrix $A$ is said to be completely positive if it can be written as $BB^t$ for some 
(not necessarily square) nonnegative matrix $B$. A simple graph $G$ is called a completely positive 
graph if every doubly nonnegative matrix realization of $G$ is a completely positive matrix. Our aim 
in this manuscript is to compute the determinant and inverse (when it exists) of the distance matrix of 
a class of completely positive graphs. Similar to trees, we obtain a  relation for the inverse 
of the distance matrix of a class of completely positive graphs involving the 
Laplacian matrix, a rank one matrix and a matrix $\mathcal{R}$. We also determine the eigenvalues of 
some matrices related to the matrix $\mathcal{R}$.
\end{abstract}

\noindent {\sc\textbf{Keywords}:} Completely positive graphs, Schur complement, Laplacian matrix, Distance matrix.\\
\noindent {\sc\textbf{MSC}:}  05C12, 05C50

\section{Introduction and Preliminaries } \label{sec:intro and prelim}

Let $G=(V,E)$  be a  finite, connected, simple and undirected graph with $V$ as the  set of vertices and $E\subset V \times V$ as the set of edges. We write $i\sim j$ to indicate that the vertices $i,j$ are adjacent in $G$. The degree of the vertex $i$ is denoted by $\delta_i$. A graph with $n$ vertices is called complete, if each vertex of the graph is adjacent to every other vertex and is denoted by $K_n$. A graph $G=(V,E)$ said to be  bipartite if $V$ can be partitioned into two subsets $V_1$ and $V_2$ such that $E\subset V_1 \times V_2$. A bipartite graph $G=(V,E)$ with  the  partition $V_1$ and $V_2$  is said to be a complete bipartite graph, if every vertex in $V_1$ is adjacent to every vertex of $V_2$. The complete bipartite graph with $|V_1|=m$ and $|V_2|=n$ is denoted by $K_{m,n}$.

The distance $d(i,j)$ from $i$ and $j$ in $G$ is the length of the shortest path from $i$ and $j$. The distance matrix of graph  $G$  is an  $n \times n$ matrix,  denoted by  $D(G) = [d_{ij}]$, where
$d_{ij}=d(i,j)   \text{, if} \ i \neq j,$ and $0  \text{ if} \ i = j.$ The Laplacian matrix of $G$  is an $n \times n$ matrix, denoted as $L(G)=[l_{ij}]$, where
$
l_{ij}=
\delta_i  \text{ if} \ i = j, 
-1   \text{ if} \ i \neq j, i \sim j $ and $0 \text{ otherwise}$.

Let $T$ be a tree with $n$ vertices. In~\cite{Gr1}, the authors proved that the determinant of the distance matrix $D(T)$ of $T$ is given by $\det D(T)=(-1)^{n-1}(n-1)2^{n-2}.$  Note that, the determinant does  not depend on the structure of the tree but the number of vertices. In \cite{Gr2}, it was shown that the inverse of the distance matrix of a tree is given by $D(T)^{-1} = -\dfrac{1}{2}L(T) + \dfrac{1}{2(n-1)}\tau \tau^t,$ where, $\tau = (2-\delta_1,2-\delta_2,...,2-\delta_n)^t.$  The above expression gives a formula for inverse of distance matrix of a tree in terms of the
Laplacian  matrix.

The determinant and the inverse of the distance matrix were also studied for bi-directed trees and weighted trees 
(for details, see~\cite{Bp2, Zhou}). In~\cite{Bp1}, similar results were studied for q-analogue of the distance matrix, which is a generalization of the distance matrix for a tree. The inverse of the distance matrix has been explored for graphs such as block graphs, bi-block graphs and cactoid digraph (for details, see~\cite{Bp3, Hou1, Hou3}). In this article, we study the determinant and inverse of the distance matrix for certain classes of completely positive graphs.

\subsection{Completely Positive Graphs}

A matrix is said to be doubly nonnegative if it is both entrywise nonnegative and positive semidefinite. A real symmetric matrix $A$ is called \emph{completely positive (cp-matrix)} if it can be decomposed as $A = BB^t$, where $B$ is a (not necessarily square) nonnegative matrix. Given an  $n \times n$ symmetric matrix $A$, we can associate a graph $G(A)$ with the matrix. The vertices of $G(A)$ are $1,2,\ldots,n$ and $i\sim j$  in $G(A)$  if and only if $i\neq j$ and $a_{ij}\neq 0$.
\begin{defn}
A graph $G$ is said to be a \emph{completely positive graph (cp-graph)},  if every doubly nonnegative, symmetric matrix $A$ whose graph is $G$ is completely positive.
\end{defn}

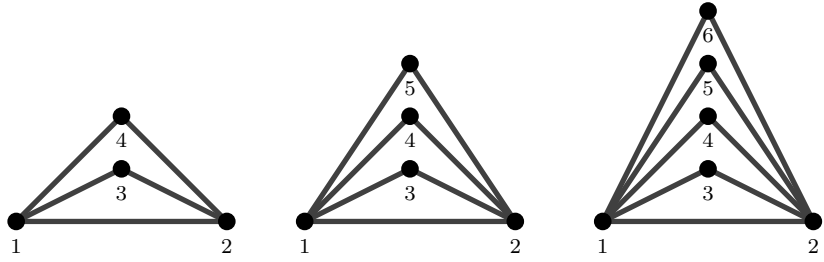
\begin{figure}
\centering
\begin{tikzpicture}[scale=.7]
\Vertex[label=1,position=below,size=.1,color=black]{A} 
\Vertex[x=4,label=2,position=below,size=.1,color=black]{B} 
\Vertex[x=2,y=1,label=3,position=below,size=.1,color=black]{C} 
\Vertex[x=2,y=2,label=4,position=below,size=.11,color=black]{D}
\Edge[lw=2pt](A)(B)
\Edge[lw=2pt](A)(C)
\Edge[lw=2pt](A)(D)
\Edge[lw=2pt](B)(C)
\Edge[lw=2pt](B)(D)
\end{tikzpicture}\hspace{5mm} 
\begin{tikzpicture} [scale=.7] 
\Vertex[label=1,position=below,size=.1,color=black]{A} 
\Vertex[x=4,label=2,position=below,size=.1,color=black]{B} 
\Vertex[x=2,y=1,label=3,position=below,size=.1,color=black]{C} 
\Vertex[x=2,y=2,label=4,position=below,size=.11,color=black]{D}
\Vertex[x=2,y=3,label=5,position=below,size=.1,color=black]{E} 
\Edge[lw=2pt](A)(B)
\Edge[lw=2pt](A)(C)
\Edge[lw=2pt](A)(D)
\Edge[lw=2pt](A)(E)
\Edge[lw=2pt](B)(C)
\Edge[lw=2pt](B)(D)
\Edge[lw=2pt](B)(E)
\end{tikzpicture}  \hspace{5mm}  \begin{tikzpicture}[scale=.7]
\Vertex[label=1,position=below,size=.1,color=black]{A} 
\Vertex[x=4,label=2,position=below,size=.1,color=black]{B} 
\Vertex[x=2,y=1,label=3,position=below,size=.1,color=black]{C} 
\Vertex[x=2,y=2,label=4,position=below,size=.11,color=black]{D}
\Vertex[x=2,y=3,label=5,position=below,size=.1,color=black]{E} 
\Vertex[x=2,y=4,label=6,position=below,size=.1,color=black]{F}
\Edge[lw=2pt](A)(B)
\Edge[lw=2pt](A)(C)
\Edge[lw=2pt](A)(D)
\Edge[lw=2pt](A)(E)
\Edge[lw=2pt](A)(F)
\Edge[lw=2pt](B)(C)
\Edge[lw=2pt](B)(D)
\Edge[lw=2pt](B)(E)
\Edge[lw=2pt](B)(F)
\end{tikzpicture}
\caption{$T_4,T_5,T_6$}\label{fig:M1}
\end{figure}

A vertex $v$ of a graph $G$ is a \emph{cut vertex} of $G$ if $G - v$ is disconnected. A block of the graph $G$ is a maximal connected subgraph of $G$ that has no cut-vertex. There are many equivalent conditions to prove that a graph $G$ is completely positive. We state a few of these.  For $n \geq 3$, let $T_n$ be a graph  consisting of $(n-2)$ triangles and a common base (see Figure~\ref{fig:M1}). Some of the equivalent conditions are stated below.

\begin{theorem}\label{thm:cpg1}\cite[Corollary 2.6]{Br}
The following properties of a graph $G$ are equivalent:
\begin{itemizeReduced}
    \item [(i)] G is a cp-graph.
    \item [(ii)]Each block of $G$ is a cp-graph.
    \item [(iii)]Each block of $G$ is either bipartite, or a $K_4$, or a $T_n$.
\end{itemizeReduced}
\end{theorem}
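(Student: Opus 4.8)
The plan is to establish the two equivalences (i) $\Leftrightarrow$ (ii) and (ii) $\Leftrightarrow$ (iii) separately, since the first is a reduction to blocks via the block–cut-vertex structure, while the second is a structural classification of $2$-connected cp-graphs. Throughout I would exploit the basic principle that if $A=BB^t$ is a cp-factorization with $B\ge 0$, then the support of every column of $B$ is a clique of $G(A)$: indeed, whenever $i\not\sim j$ one has $a_{ij}=\sum_k b_{ik}b_{jk}=0$ with all terms nonnegative, so no column of $B$ can contain both $i$ and $j$ in its support. Thus $A$ is cp exactly when it lies in the cone generated by the rank-one matrices $v v^t$ with $\operatorname{supp}(v)$ a clique of $G$.

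For (i) $\Leftrightarrow$ (ii) I would argue along the block–cut tree. The direction (i) $\Rightarrow$ (ii) is by contraposition: if some block $B$ carries a DNN realization $A_B$ that is not completely positive, I would extend $A_B$ to a DNN realization $A$ of all of $G$ by padding the off-block entries with a convenient forest-type pattern whose complete positivity is automatic, and then observe that any cp-factorization of $A$ restricts to one of $A_B$, a contradiction. The converse (ii) $\Rightarrow$ (i) is the crux and rests on an amalgamation lemma over a cut vertex: if $v$ separates $G$ into $G_1$ and $G_2$ meeting only at $v$, and every DNN realization of each $G_i$ is cp, then so is every DNN realization of $G$. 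Since $v$ is a cut vertex, a DNN matrix $A$ of $G$ has $a_{ij}=0$ for $i,j$ on opposite sides, so $A$ is block-arrow-shaped about the index $v$; I would split the diagonal entry as $a_{vv}=s+t$ with $s,t\ge 0$ chosen (via a Schur-complement estimate) so that the two resulting principal submatrices supported on $G_1$ and $G_2$ are each DNN, apply the hypothesis to factor each, and concatenate the nonnegative factors. Iterating this over the block–cut tree yields (i).

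For (ii) $\Leftrightarrow$ (iii) the key input I would invoke is the criterion that a graph is a cp-graph if and only if it contains no odd cycle of length at least $5$, i.e. every odd cycle is a triangle. One half of this is a construction: for each odd cycle $C_{2k+1}$ with $k\ge 2$ one exhibits a DNN matrix supported exactly on that cycle lying outside the clique cone (the classical $C_5$ gap, extended), so a graph with a long odd cycle is not cp. Granting the criterion, (iii) becomes the purely combinatorial classification: a $2$-connected graph in which every odd cycle is a triangle is bipartite, or $K_4$, or some $T_n$. I would dispose of the bipartite case at once, then assume $G$ is $2$-connected, non-bipartite with a triangle $\{a,b,c\}$, and analyze (via an ear decomposition and the parities of paths joining triangle vertices) how any further vertex attaches: each such vertex must be adjacent to a common pair of base vertices, forcing the triangular-book $T_n$, unless a fourth mutually adjacent vertex appears, forcing $K_4$; every other attachment produces a $5$-cycle. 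Conversely, each of the three families is cp because its only odd cycles are triangles, so every DNN realization lies in the clique cone (bipartite realizations factor directly, and $K_4$, $T_n$ are verified by their clique structure).

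The step I expect to be the main obstacle is this combinatorial classification in (ii) $\Rightarrow$ (iii): excluding all configurations that avoid a long odd cycle yet differ from $T_n$ and $K_4$ requires a careful case analysis of the $2$-connected structure and the parities of internally disjoint paths, and it is precisely here that the common base edge of $T_n$ and the rigidity of $K_4$ must be forced. By comparison, the amalgamation lemma and the complete-positivity verifications for the three families are routine once the clique-cone viewpoint and the no-long-odd-cycle criterion are in hand.
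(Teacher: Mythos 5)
The first thing to note is that the paper contains no proof of this statement at all: Theorem~\ref{thm:cpg1} is imported verbatim from \cite[Corollary~2.6]{Br}, so there is no in-paper argument to compare against, and your outline has to be measured against the standard literature proof (Berman and Shaked-Monderer, building on Berman--Hershkowitz and Kogan--Berman). In architecture your plan matches that proof: the clique-cone description of cp factorizations is correct (supports of columns of a nonnegative factor are cliques), and your amalgamation lemma for (ii)~$\Rightarrow$~(i) is exactly right --- since the off-corner blocks of $A$ between $G_1-v$ and $G_2-v$ vanish, the two Schur-complement lower bounds on the split $a_{vv}=s+t$ sum to at most $a_{vv}$, so the split exists and the nonnegative factors concatenate.

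There are, however, three genuine soft spots. First, in (i)~$\Rightarrow$~(ii) your claim that ``any cp-factorization of $A$ restricts to one of $A_B$'' fails as stated: the padding necessarily adds to the diagonal entries of $A$ at the cut vertices of $B$, so the principal submatrix of $A$ on $V(B)$ is $A_B+\varepsilon D$ with $D$ a nonnegative diagonal matrix, and complete positivity of $A_B+\varepsilon D$ does not directly yield that of $A_B$ (subtracting from the diagonal can destroy cp-ness). The standard repair is to scale the padding so that $\varepsilon$ is arbitrarily small and invoke closedness of the cp cone. Second, in the necessity half of your odd-cycle criterion, a DNN matrix ``supported exactly on that cycle'' is a realization of $C_{2k+1}$, not of $G$; the cp-graph hypothesis only quantifies over matrices whose graph is exactly $G$, and the long odd cycle need not even be induced. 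One must pad with small positive entries on the remaining edges and certify that non-cp-ness persists, e.g., via a copositive witness (the Horn matrix for $C_5$ and its analogues) whose inner product with the realization stays negative under small perturbations. Third, there is a circularity risk: you invoke the criterion ``cp iff no odd cycle of length at least $5$'' as the key input, but the sufficiency direction of that criterion is standardly proved via this very theorem --- the block classification together with the cp-ness of bipartite graphs (Berman--Grone), of $K_4$ (every DNN matrix of order at most $4$ is cp), and of $T_n$, the last of which is a genuinely nontrivial verification rather than ``routine from the clique structure.'' The sound organization runs the other way: classification lemma plus the three cp-ness verifications plus the block reduction, with the odd-cycle criterion as a corollary, not a premise. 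You did correctly identify the classification of $2$-connected graphs whose odd cycles are all triangles as the combinatorial crux.
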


A graph said to be a bi-block graph if each of its blocks is a complete bipartite graph. Note that every tree is a bipartite graph.  The determinant and inverse of the distance matrix have been studied for these graphs, and interestingly, the formula for the inverse of the distance matrix comes in terms of the Laplacian matrix (or the Laplacian like matrix; for details, 
see~\cite{Gr1,Gr2,Hou3}).  In view of Theorem~\ref{thm:cpg1}, both bi-block graphs and trees are cp-graphs. In this 
article, our primary interest is to compute the determinant and inverse of the distance matrix of a class of cp-graphs such that each of its blocks is $T_n$,  for a fixed $n$ and with a central cut vertex, which is not a base vertex. We denote such graph as $T_n^{(b)}$, where $b \ (\geq 2)$ represent number of blocks (see Figure~\ref{fig:T-n-b}). 

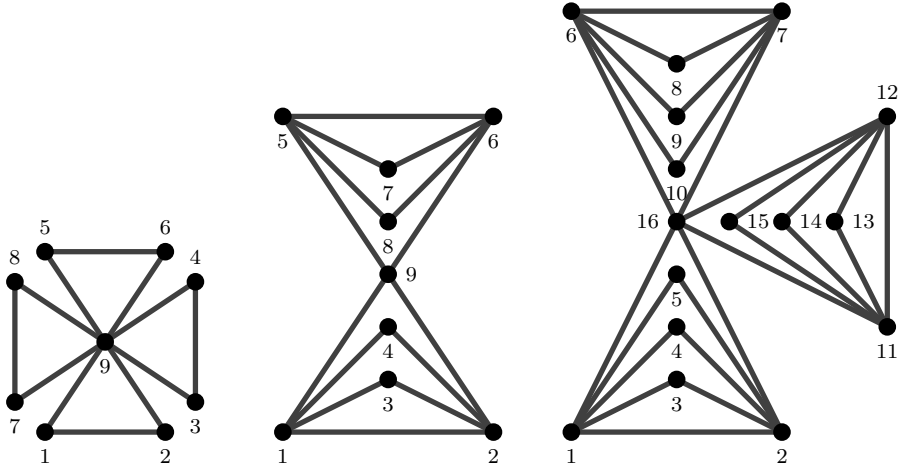
\begin{figure}
\centering
\begin{tikzpicture}[scale=0.4]
\Vertex[label=1,position=below,size=.1,color=black]{A} 
\Vertex[x=4,label=2,position=below,size=.1,color=black]{B} 
\Vertex[x=2,y=3,label=9,position=below,size=.1,color=black]{C}

\Vertex[x=0,y=6,label=5,position=above,size=.1,color=black]{A1} 
\Vertex[x=4,y=6,label=6,position=above,size=.1,color=black]{B1}

\Vertex[x=5,y=1,label=3,position=below,size=.1,color=black]{A2} 
\Vertex[x=5,y=5,label=4,position=above,size=.1,color=black]{B2}

\Vertex[x=-1,y=1,label=7,position=below,size=.1,color=black]{A3} 
\Vertex[x=-1,y=5,label=8,position=above,size=.1,color=black]{B3}

\Edge[lw=2pt](A)(B)
\Edge[lw=2pt](A)(C)
\Edge[lw=2pt](B)(C)

\Edge[lw=2pt](A1)(B1)
\Edge[lw=2pt](A1)(C)
\Edge[lw=2pt](B1)(C)

\Edge[lw=2pt](A2)(B2)
\Edge[lw=2pt](A2)(C)
\Edge[lw=2pt](B2)(C)

\Edge[lw=2pt](A3)(B3)
\Edge[lw=2pt](A3)(C)
\Edge[lw=2pt](B3)(C)
\end{tikzpicture} \hspace{5mm} \begin{tikzpicture}[scale=0.7]
\Vertex[label=1,position=below,size=.1,color=black]{A} 
\Vertex[x=4,label=2,position=below,size=.1,color=black]{B} 
\Vertex[x=2,y=1,label=3,position=below,size=.1,color=black]{C} 
\Vertex[x=2,y=2,label=4,position=below,size=.11,color=black]{D}
\Vertex[x=2,y=3,label=9,position=right,size=.1,color=black]{E}

\Vertex[x=0,y=6,label=5,position=below,size=.1,color=black]{A1} 
\Vertex[x=4,y=6,label=6,position=below,size=.1,color=black]{B1}
\Vertex[x=2,y=5,label=7,position=below,size=.1,color=black]{C1} 
\Vertex[x=2,y=4,label=8,position=below,size=.1,color=black]{D1}

\Edge[lw=2pt](A)(B)
\Edge[lw=2pt](A)(C)
\Edge[lw=2pt](A)(D)
\Edge[lw=2pt](A)(E)
\Edge[lw=2pt](B)(C)
\Edge[lw=2pt](B)(D)
\Edge[lw=2pt](B)(E)

\Edge[lw=2pt](A1)(B1)
\Edge[lw=2pt](A1)(C1)
\Edge[lw=2pt](A1)(D1)
\Edge[lw=2pt](A1)(E)
\Edge[lw=2pt](B1)(C1)
\Edge[lw=2pt](B1)(D1)
\Edge[lw=2pt](B1)(E)

\end{tikzpicture}\hspace{5mm} 
\begin{tikzpicture}[scale=.7]
\Vertex[label=1,position=below,size=.1,color=black]{A} 
\Vertex[x=4,label=2,position=below,size=.1,color=black]{B} 
\Vertex[x=2,y=1,label=3,position=below,size=.1,color=black]{C} 
\Vertex[x=2,y=2,label=4,position=below,size=.11,color=black]{D}
\Vertex[x=2,y=3,label=5,position=below,size=.1,color=black]{E} 
\Vertex[x=2,y=4,label=16,position=left,size=.1,color=black]{F}

\Vertex[x=0,y=8,label=6,position=below,size=.1,color=black]{A1} 
\Vertex[x=4,y=8,label=7,position=below,size=.1,color=black]{B1}
\Vertex[x=2,y=7,label=8,position=below,size=.1,color=black]{D1} 
\Vertex[x=2,y=6,label=9,position=below,size=.1,color=black]{E1} 
\Vertex[x=2,y=5,label=10,position=below,size=.1,color=black]{F1}

\Vertex[x=3,y=4,label=15,position=right,size=.1,color=black]{A2} 
\Vertex[x=4,y=4,label=14,position=right,size=.1,color=black]{A3} 
\Vertex[x=5,y=4,label=13,position=right,size=.1,color=black]{A4}
\Vertex[x=6,y=2,label=11,position=below,size=.1,color=black]{B2} 
\Vertex[x=6,y=6,label=12,position=above,size=.1,color=black]{B3}

\Edge[lw=2pt](A)(B)
\Edge[lw=2pt](A)(C)
\Edge[lw=2pt](A)(D)
\Edge[lw=2pt](A)(E)
\Edge[lw=2pt](A)(F)
\Edge[lw=2pt](B)(C)
\Edge[lw=2pt](B)(D)
\Edge[lw=2pt](B)(E)
\Edge[lw=2pt](B)(F)
\Edge[lw=2pt](A1)(B1)
\Edge[lw=2pt](A1)(D1)
\Edge[lw=2pt](A1)(E1)
\Edge[lw=2pt](A1)(F1)
\Edge[lw=2pt](A1)(F)
\Edge[lw=2pt](B1)(D1)
\Edge[lw=2pt](B1)(E1)
\Edge[lw=2pt](B1)(F1)
\Edge[lw=2pt](B1)(F)

\Edge[lw=2pt](B2)(B3)
\Edge[lw=2pt](B2)(A2)
\Edge[lw=2pt](B2)(A3)
\Edge[lw=2pt](B2)(A4)
\Edge[lw=2pt](B2)(F)

\Edge[lw=2pt](B3)(A2)
\Edge[lw=2pt](B3)(A3)
\Edge[lw=2pt](B3)(A4)
\Edge[lw=2pt](B3)(F)
\end{tikzpicture}
\caption{$T_3^{(4)},T_5^{(2)},T_6^{(3)}$}\label{fig:T-n-b} 
\end{figure}

Let $D(T_{n}^{(b)})$ denote the distance matrix of $T_{n}^{(b)}$. It turns out that, similar to the case of bi-block graphs and trees,  $D(T_{n}^{(b)})^{-1}$  can be expressed in terms of the Laplacian matrix $L(T_{n}^{(b)})$ and a new matrix $\mathcal{R}(T_n^{(b)})$. In particular, for $n\neq 6$ and $b\geq 2$, the expression for $D(T_n^{(b)})^{-1}$ is given by
\begin{equation}\label{eqn:Dinv}
D(T_n^{(b)})^{-1} = -\frac{1}{2} L(T_n^{(b)}) + \frac{1}{2b}J + \frac{1}{2(n-6)b}\mathcal{R}(T_n^{(b)}),
\end{equation}
where, $J$ is a matrix of all one's. Unlike the case of trees and bi-block graphs, we only give an expression for $D(T_n^{(b)})^{-1}$ similar to the trees. At this point, we are not able to retrieve information about the cp-graph from the matrix $\mathcal{R}$.

This article is organized as follows. In Section~\ref{sec:pre-matrix-notation}, we recall necessary results from matrix theory and also fix a few notations which will appear throughout this article. In Section~\ref{sec:Single block}, we compute the determinant and inverse of the distance matrix of single blocks of $T_{n}$, $K_{m,n}  (m=n\neq 2)$ and $K_4$. In Section~\ref{sec:Multiple block}, we compute the determinant of the distance matrix of $T_{n}^{(b)}$ and find its inverse, whenever it exists. The manuscript ends with a section on some properties of the matrix related to $\mathcal{R}(T_n^{(b)})$, appearing in Eqn~(\ref{eqn:Dinv}).

\subsection{Notations and Some Preliminary Results}\label{sec:pre-matrix-notation}

We begin this section by introducing a few notations which will be used throughout this article. Let $I_n$ and $ \mathds{1}_n, $ denote the identity matrix and the column vector of all ones of order $n$ respectively.  Further, $J_{m \times n}$  denotes the $m\times n$ matrix of all ones and if $m=n$, we use the notation $J_m$.  We write $\mathbf{0}_{m \times n}$ to represent zero matrix of order $m \times n$ and simply write $\mathbf{0}$. Unless there is a scope of confusion, we omit the order of the matrices.

Let $A$ be an $m \times n$ matrix. If $S_1 \subset \{1,\cdots , m\}, \ S_2\subset \{1,\cdots,n\}$, then $A[S_1 | S_2]$ 
denotes the submatrix of A determined by the rows corresponding to $S_1$ and the columns corresponding to $S_2$. If $S_1 = S_2 = S$, then we write $A[S]$ to denote the principal submatrix of $A$ determined by the set $S$. 
Given a matrix $A$, we use $A^t$ to denote the transpose of the matrix.

Let $B$ be an $n\times n$ matrix partitioned as
\begin{equation}\label{eqn:B}
B= \left[
\begin{array}{c|c}
B_{11}& B_{12} \\
\midrule
B_{21} &B_{22}
\end{array}
\right],
\end{equation}
where $ B_{11}$ and $B_{22}$ are square matrices. If $B_{11}$ is nonsingular, then the Schur complement of $B_{11}$ in $B$ is defined to be the matrix $B/B_{11}=B_{22} -B_{21}B_{11}^{-1}B_{12}$. The following 
result, which gives the inverse (whenever the matrix is invertible) of a partitioned matrix using Schur complements will be 
used in our calculations.


\begin{prop}\label{prop:schur}(\cite{Bapat}, page 4)
Let $B$ be a nonsingular matrix partitioned as in Eqn~(\ref{eqn:B}). Suppose  $B_{11}$ is square and invertible. Then
\[
B^{-1}=\left[
\begin{array}{c|c}
B_{11}^{-1}+ B_{11}^{-1} B_{12} (B/B_{11})^{-1} B_{21}B_{11}^{-1}& -B_{11}^{-1}B_{12}(B/B_{11})^{-1}  \\
\midrule
-(B/B_{11})^{-1}B_{21}B_{11}^{-1} & (B/B_{11})^{-1}
\end{array}
\right]
\]
\end{prop}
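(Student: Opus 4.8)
The plan is to prove the formula via the block triangular (block LU) factorization of $B$. First I would exhibit the decomposition
\[
B=\begin{bmatrix} I & 0 \\ B_{21}B_{11}^{-1} & I \end{bmatrix}\begin{bmatrix} B_{11} & 0 \\ 0 & B/B_{11} \end{bmatrix}\begin{bmatrix} I & B_{11}^{-1}B_{12} \\ 0 & I \end{bmatrix}
\]
and verify it by a direct block multiplication: the lower-left block of the product is $B_{21}B_{11}^{-1}B_{11}=B_{21}$ and the lower-right block is $B_{21}B_{11}^{-1}B_{12}+(B/B_{11})=B_{22}$, using the defining relation $B/B_{11}=B_{22}-B_{21}B_{11}^{-1}B_{12}$, while the top row reproduces $B_{11}$ and $B_{12}$ immediately. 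This factorization is the main structural ingredient, and the rest is bookkeeping.

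Next I would observe that the two unipotent block triangular factors are invertible regardless of any hypothesis, with inverses obtained simply by negating the off-diagonal block. Since $B$ is nonsingular by assumption and $B_{11}$ is invertible by hypothesis, the factorization forces the middle block-diagonal factor $\mathrm{diag}(B_{11},\,B/B_{11})$ to be invertible; hence $B/B_{11}$ is invertible. This is the step that justifies writing $(B/B_{11})^{-1}$ in the claimed formula, and it is worth recording explicitly since the statement implicitly presumes this inverse exists.

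Finally I would invert the product by reversing the order of the three factors and inverting each one, yielding
\[
B^{-1}=\begin{bmatrix} I & -B_{11}^{-1}B_{12} \\ 0 & I \end{bmatrix}\begin{bmatrix} B_{11}^{-1} & 0 \\ 0 & (B/B_{11})^{-1} \end{bmatrix}\begin{bmatrix} I & 0 \\ -B_{21}B_{11}^{-1} & I \end{bmatrix},
\]
and then multiply the three factors together, reading off the four blocks. The bottom-right block is $(B/B_{11})^{-1}$, the bottom-left is $-(B/B_{11})^{-1}B_{21}B_{11}^{-1}$, the top-right is $-B_{11}^{-1}B_{12}(B/B_{11})^{-1}$, and the top-left picks up the extra term $B_{11}^{-1}B_{12}(B/B_{11})^{-1}B_{21}B_{11}^{-1}$ added to $B_{11}^{-1}$, matching the stated expression exactly. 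There is no genuine obstacle here beyond careful block arithmetic; an alternative and equally short route is to multiply $B$ directly by the claimed right-hand side and check that the product is the identity block by block, though that route does not by itself establish invertibility of the Schur complement, which is why I prefer the factorization.
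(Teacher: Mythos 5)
Your proof is correct and complete. The paper itself offers no proof of this proposition --- it is quoted from Bapat's book --- and your block-triangular (LDU) factorization $B = L\,\mathrm{diag}(B_{11},\, B/B_{11})\,U$ with unipotent $L$, $U$ is precisely the standard textbook argument for it; all four blocks of the resulting inverse match the stated formula. A small merit of your write-up beyond the bare statement is that you make explicit the invertibility of $B/B_{11}$, which the proposition uses tacitly: since the unipotent factors are always invertible and $B$ is assumed nonsingular, the middle factor, and hence the Schur complement, must be invertible --- a point that the alternative ``multiply by the claimed inverse and check'' route would not deliver on its own, as you rightly observe.
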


Given two matrices $A$ and $B$ of the same size such that $A$ and $B$ are invertible, it is not necessary that $A+B$ is invertible. However, when both $A$ and $A+B$ are invertible, and $B$ has rank $1$, Miller obtained a formula for the inverse of $A+B$. This result is stated below.


\begin{theorem}\label{thm:A+B-inverse}(\cite{Miller})
Let $A$ and $A+B$ be invertible, and $B$ have rank $1$. If $g=\text{trace}(BA^{-1})$, then $g \neq -1$ and 
$$(A+B)^{-1}=A^{-1} - \frac{1}{1+g} A^{-1}BA^{-1}.$$
\end{theorem}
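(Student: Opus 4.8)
The plan is to reduce the statement to the classical Sherman--Morrison rank-one update. Since $B$ has rank $1$, I would first write $B = uv^t$ for some nonzero column vectors $u,v \in \R^n$. With this factorization the scalar $g$ becomes transparent: using the cyclic invariance of the trace and the fact that $v^t A^{-1} u$ is itself a scalar, one obtains $g = \text{trace}(BA^{-1}) = \text{trace}(uv^t A^{-1}) = v^t A^{-1} u$. This identity is what lets me rewrite the asserted correction term $A^{-1}BA^{-1}$ as $A^{-1} u v^t A^{-1}$ and track its coefficient explicitly.

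Next I would establish $g \neq -1$, which is the one place where the invertibility of $A+B$ is genuinely used. Writing $A + B = A(I + A^{-1}B)$ and using that both $A$ and $A+B$ are invertible, the matrix $I + A^{-1}B$ must be invertible. Now $A^{-1}B = A^{-1}uv^t$ is a rank-one matrix, so it has a single possibly-nonzero eigenvalue equal to its trace, namely $v^t A^{-1} u = g$; hence the eigenvalues of $I + A^{-1}B$ are $1$ (with multiplicity $n-1$) together with $1+g$. Invertibility then forces $1 + g \neq 0$, i.e. $g \neq -1$, so the claimed expression is well defined.

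Finally, with $g \neq -1$ secured, I would verify the formula by direct multiplication rather than by any further inversion argument. Substituting $B = uv^t$ and expanding
\[
(A + uv^t)\left(A^{-1} - \frac{1}{1+g} A^{-1} u v^t A^{-1}\right),
\]
every resulting term is a scalar multiple of either $I$ or $uv^t A^{-1}$; collecting the coefficients of $uv^t A^{-1}$ and using $v^t A^{-1} u = g$ shows they sum to $-\tfrac{1}{1+g} + 1 - \tfrac{g}{1+g} = 0$, leaving exactly $I$. Since $A+B$ is already known to be invertible, this single-sided check suffices to identify the right-hand side as $(A+B)^{-1}$.

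I do not expect any serious obstacle here: the only nonroutine ingredient is recognizing that the invertibility of $A+B$ is precisely what guarantees $g \neq -1$, via the rank-one spectrum of $A^{-1}B$. Everything else is the standard Sherman--Morrison verification, and the rank-one factorization $B = uv^t$ is what makes all the intermediate quantities scalars that can be cancelled by hand.
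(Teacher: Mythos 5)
Your proof is correct, and there is in fact nothing internal to compare it against: the paper states this result only as a citation to Miller and gives no proof of its own, so your argument has to stand alone, which it does. All three steps check out: the factorization $B = uv^t$ with $u,v \neq 0$ exists precisely because $\operatorname{rank}(B)=1$; the trace identity $g = \operatorname{trace}(uv^tA^{-1}) = v^tA^{-1}u$ is the standard cyclic-trace computation; and the spectrum argument for $g \neq -1$ is sound, since $A^{-1}B = (A^{-1}u)v^t$ is rank one ($A^{-1}u \neq 0$ as $A$ is invertible), so its characteristic polynomial is $x^{n-1}(x - g)$ and the eigenvalues of $I + A^{-1}B$ are $1$ and $1+g$, forcing $1+g \neq 0$ by invertibility of $A(I+A^{-1}B) = A+B$. (A marginally slimmer route to the same conclusion is the determinant identity $\det(I + A^{-1}uv^t) = 1 + v^tA^{-1}u = 1+g$, which avoids any discussion of eigenvalue multiplicities, including the degenerate nilpotent case $g=0$ that your eigenvalue phrasing quietly absorbs.) The final verification by multiplying out $(A+uv^t)\bigl(A^{-1} - \tfrac{1}{1+g}A^{-1}uv^tA^{-1}\bigr)$ and collapsing $v^tA^{-1}u$ to $g$ is exactly the Sherman--Morrison computation, and your remark that a one-sided check suffices is fine (for square matrices it suffices even without invoking invertibility of $A+B$ again). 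This is also essentially how the rank-one case is handled in Miller's paper itself, whose main contribution beyond it is the inductive extension to higher-rank perturbations; for the purposes of this paper, where only the rank-one case is used (in Lemma 2.11 for the Schur complement of $D(K_{m,n})$), your proof fully discharges the cited statement.
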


We end this section with a standard result on computing the determinant of block triangular matrices. The proof 
of this result can be found in any standard text on matrix theory.

\begin{prop}\label{prop:blockdet}
 Let $A_{11}
   \mbox{ and } A_{22}$ be square matrices.  Then $$\det \left[
\begin{array}{c|c}
A_{11} & \bigzero  \\
\midrule
A_{21} & A_{22}
\end{array}
\right] = \det A_{11}\times \det A_{22}.$$
\end{prop}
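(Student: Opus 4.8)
The plan is to prove the identity directly from the Leibniz (permutation) expansion of the determinant, which has the advantage of requiring no invertibility hypothesis on either diagonal block and therefore covers the statement exactly as given. Write $A_{11}$ for a $k\times k$ block and $A_{22}$ for an $\ell\times\ell$ block, so that the displayed matrix, call it $M$, has order $k+\ell$, with the crucial structural feature that its top-right $k\times\ell$ block is the zero matrix.

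First I would expand $\det M=\sum_{\pi}\operatorname{sgn}(\pi)\prod_{i=1}^{k+\ell}m_{i,\pi(i)}$, the sum being over all permutations $\pi$ of $\{1,\dots,k+\ell\}$. The key observation is that a permutation can contribute a nonzero term only if $\pi(\{1,\dots,k\})\subseteq\{1,\dots,k\}$: for any index $i\le k$ with $\pi(i)>k$, the factor $m_{i,\pi(i)}$ lies in the zero top-right block and annihilates the entire product. Since $\pi$ is a bijection, the inclusion $\pi(\{1,\dots,k\})\subseteq\{1,\dots,k\}$ forces equality, and consequently $\pi(\{k+1,\dots,k+\ell\})=\{k+1,\dots,k+\ell\}$ as well. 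Thus every surviving $\pi$ splits as a disjoint pair $(\pi_1,\pi_2)$, where $\pi_1$ permutes $\{1,\dots,k\}$ and $\pi_2$ permutes $\{k+1,\dots,k+\ell\}$, the latter identified with $S_\ell$ after the shift $j\mapsto j-k$.

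Next I would record that the sign is multiplicative under this splitting, $\operatorname{sgn}(\pi)=\operatorname{sgn}(\pi_1)\operatorname{sgn}(\pi_2)$, because $\operatorname{sgn}$ is a homomorphism and $\pi_1,\pi_2$ act on disjoint consecutive index blocks, so the shift by $k$ does not affect the parity of $\pi_2$. Substituting and factoring the resulting double sum then yields
\[
\det M=\Big(\sum_{\pi_1\in S_k}\operatorname{sgn}(\pi_1)\prod_{i=1}^{k}(A_{11})_{i,\pi_1(i)}\Big)\Big(\sum_{\pi_2\in S_\ell}\operatorname{sgn}(\pi_2)\prod_{j=1}^{\ell}(A_{22})_{j,\pi_2(j)}\Big)=\det A_{11}\,\det A_{22},
\]
which is precisely the claim; note that the entries of $A_{21}$ never enter the computation, exactly as the statement predicts. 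The only step that genuinely warrants care is the sign bookkeeping, namely verifying $\operatorname{sgn}(\pi)=\operatorname{sgn}(\pi_1)\operatorname{sgn}(\pi_2)$ cleanly; every other step is an immediate consequence of the vanishing top-right block.

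As a self-contained alternative that avoids permutation parities, I would instead induct on $\ell$ by Laplace expansion along the last column of $M$. The top $k$ entries of that column lie in the zero block, so only the rows indexed $k+1,\dots,k+\ell$ contribute; deleting any such row together with the last column leaves a matrix of the same block-triangular shape with $A_{11}$ intact and a smaller $(\ell-1)\times(\ell-1)$ bottom-right block, to which the inductive hypothesis applies. After checking that the cofactor signs match (the extra factor $(-1)^{2k}$ is trivial), the surviving sum is exactly the cofactor expansion of $\det A_{22}$ along its last column, giving $\det M=\det A_{11}\det A_{22}$. I would present the Leibniz argument as the main proof, since it is uniform and free of any hypothesis, and relegate the inductive version to a remark.
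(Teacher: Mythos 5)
Your proof is correct. The paper itself offers no proof of this proposition---it explicitly defers to ``any standard text on matrix theory''---so there is no in-paper argument to deviate from; your Leibniz expansion, with the key observation that the zero top-right block forces $\pi(\{1,\dots,k\})=\{1,\dots,k\}$ so that surviving permutations split as $\pi_1\times\pi_2$ with $\operatorname{sgn}(\pi)=\operatorname{sgn}(\pi_1)\operatorname{sgn}(\pi_2)$, is precisely the canonical textbook proof the authors are pointing to, and your inductive Laplace-expansion alternative, including the trivial $(-1)^{2k}$ cofactor-sign check, is equally sound.
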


%

\section{ Determinant and Inverse of Single Blocks}\label{sec:Single block}

We begin this section with the study of the distance matrix of a single block of $T_n$. We find an expression for the determinant of the distance matrix of $T_n$, $D(T_n)$, establishing that $\det D(T_n)\neq 0$ and provide the formula for the inverse of the same.

We discuss the choice of vertex indexing for the graph $T_n$ as it is essential to our proofs. We recall that for $n \geq 3$, $T_n$ is the graph with $n$ vertices consisting of $n-2$ triangles with a common base. Let $\{1,2,\ldots,n\}$ be the vertex set of $T_n$ and we denote $1,2$ as  the base vertices and $3,4,\ldots,n$ as non-base vertices (for example see Figure~\ref{fig:M1}). Throughout this article, unless stated otherwise, we will follow the above vertex indexing for  $T_n$. The following 
lemma and the remark that follows will be used in computing the inverse of $D(T_n)$. The proofs are skipped.

\begin{lem}\label{lem:aI+bJ}
Let  $n \geq 2$, let $J_n$, $I_n$ be matrices as defined before. For $a \neq 0$, the eigenvalues of $aI_n+bJ_n$ are $a$ 
and $a+nb$ with multiplicities $n-1$ and $1$, respectively, and the determinant is given by $a^{n-1}(a+nb)$. 
Moreover, the matrix is invertible if and only if $a+nb \neq 0$ and the inverse is given by 
$$(aI_n + bJ_n)^{-1} = \frac{1}{a} \left(I_n - \frac{b}{a+nb} J_n\right).$$
\end{lem}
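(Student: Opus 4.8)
The plan is to diagonalize $aI_n+bJ_n$ by exploiting the rank-one structure of $J_n$. First I would record that $J_n = \mathds{1}_n\mathds{1}_n^t$, so $J_n$ has rank one with $J_n\mathds{1}_n = n\mathds{1}_n$; hence $n$ is an eigenvalue of $J_n$ with eigenvector $\mathds{1}_n$, while every vector orthogonal to $\mathds{1}_n$ lies in the kernel of $J_n$, giving the eigenvalue $0$ with multiplicity $n-1$. Since $aI_n+bJ_n$ is an affine function of $J_n$, it has the same eigenvectors, and an eigenvalue $\mu$ of $J_n$ produces the eigenvalue $a+b\mu$ of $aI_n+bJ_n$. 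This immediately yields the eigenvalue $a$ with multiplicity $n-1$ (from $\mu=0$) and the eigenvalue $a+nb$ with multiplicity $1$ (from $\mu=n$).

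Next I would read off the determinant as the product of the eigenvalues, $\det(aI_n+bJ_n)=a^{n-1}(a+nb)$. The matrix is invertible precisely when none of its eigenvalues vanishes; since $a\neq 0$ by hypothesis, this is equivalent to the single condition $a+nb\neq 0$, which proves the invertibility claim.

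For the inverse, assuming $a+nb\neq 0$, I would look for the inverse within the commutative algebra spanned by $I_n$ and $J_n$, positing $(aI_n+bJ_n)^{-1}=\alpha I_n+\beta J_n$. Using the relation $J_n^2=nJ_n$, expanding the product $(aI_n+bJ_n)(\alpha I_n+\beta J_n)$ and matching the coefficients of $I_n$ and $J_n$ against the identity gives $a\alpha=1$ and $a\beta+b\alpha+nb\beta=0$. Solving these two scalar equations yields $\alpha=1/a$ and $\beta=-b/\big(a(a+nb)\big)$, which rearranges to the stated formula $\frac{1}{a}\big(I_n-\frac{b}{a+nb}J_n\big)$. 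Alternatively, since $bJ_n$ has rank one, one may apply Theorem~\ref{thm:A+B-inverse} with $A=aI_n$ and $B=bJ_n$: here $g=\mathrm{trace}\big(bJ_n\cdot a^{-1}I_n\big)=nb/a$, and substituting into Miller's formula reproduces the same expression. There is no genuine obstacle here; the only points requiring care are the bookkeeping of eigenvalue multiplicities and isolating the invertibility condition $a+nb\neq 0$ from the standing assumption $a\neq 0$.
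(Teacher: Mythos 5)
Your proof is correct, and there is nothing in the paper to compare it against: the authors state this lemma with the remark ``The proofs are skipped,'' treating it as a standard fact. Your argument is the expected one --- the spectral decomposition of the rank-one matrix $J_n=\mathds{1}_n\mathds{1}_n^t$ gives the eigenvalues and the determinant, and the ansatz $\alpha I_n+\beta J_n$ with $J_n^2=nJ_n$ yields the inverse; your alternative via Theorem~\ref{thm:A+B-inverse} (which the paper itself invokes later, in Lemma~\ref{lemma:Kmn}) also works, with the trivial caveat that $b\neq 0$ is needed there for $bJ_n$ to have rank one, the case $b=0$ being immediate.
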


\begin{rem}\label{rem-new}
Let  $\mathcal{A}_2 = \left[\begin{array}{cc}
                            0 & 1\\
                            1 & 0
                           \end{array}
                      \right]$ and let $J_{r \times s}$ be the $r\times s$ matrix of all ones. Then, $\mathcal{A}_2^2=I_2$, 
$\mathcal{A}_2J_{2\times s }=J_{2\times s }$, $J_{r\times 2 }\mathcal{A}_2= J_{r\times 2 }$, $\mathcal{A}_2J_{2 }\mathcal{A}_2=J_{2 }$ and $J_{r\times t }J_{t\times s }= t J_{r\times s }.$
\end{rem}

The distance matrix of $T_n$ can be written in the following block matrix form,
\begin{equation}\label{eqn: D(T_n)}
D(T_n)=
\left[
\begin{array}{c|c}
\mathcal{A}_2 & J_{2\times (n-2)} \\
\midrule
J_{(n-2)\times 2} &2(J_{n-2}-I_{n-2})
\end{array}
\right].
\end{equation}

The following theorem gives the determinant and inverse of $D(T_n)$, when $n \geq 3$.


\begin{theorem}\label{thm:det_Tn}
Let $D(T_n)$ be the distance matrix of the graph $T_n$ for $n \geq 3$. Then the determinant and inverse of $D(T_n)$ 
equal $(-1)^{n-1}  2^{n-2}$ and \[
D(T_n)^{-1}=\left[
\begin{array}{c|c}
\mathcal{A}_2-\frac{n-2}{2}J_2 & \frac{1}{2}J_{2\times (n-2)} \\
\midrule
\frac{1}{2}J_{(n-2)\times 2 } & -\frac{1}{2}I_{n-2}
\end{array}
\right],
\] respectively.
\end{theorem}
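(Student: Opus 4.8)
The plan is to exploit the block form of $D(T_n)$ given in Eqn~(\ref{eqn: D(T_n)}) together with the Schur complement machinery of Proposition~\ref{prop:schur}, taking the pivot block to be $B_{11}=\mathcal{A}_2$. The first observation is that this is a legitimate pivot: by Remark~\ref{rem-new}, $\mathcal{A}_2^2=I_2$, so $\mathcal{A}_2$ is invertible with $\mathcal{A}_2^{-1}=\mathcal{A}_2$, and directly $\det \mathcal{A}_2=-1$. The reason $\mathcal{A}_2$ is the right choice is that every product of $\mathcal{A}_2$ with a block of all ones collapses through the identities recorded in Remark~\ref{rem-new}, which is exactly what makes the calculation close.

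The crux of the whole argument is the computation of the Schur complement
\[
D(T_n)/\mathcal{A}_2 \;=\; 2(J_{n-2}-I_{n-2}) \;-\; J_{(n-2)\times 2}\,\mathcal{A}_2^{-1}\,J_{2\times(n-2)}.
\]
Substituting $\mathcal{A}_2^{-1}=\mathcal{A}_2$, then using $J_{(n-2)\times 2}\mathcal{A}_2=J_{(n-2)\times 2}$ and $J_{(n-2)\times 2}J_{2\times(n-2)}=2J_{n-2}$, the all-ones contributions cancel exactly and the Schur complement reduces to the scalar matrix $-2I_{n-2}$. This single cancellation is the key step; once it is in place everything else is bookkeeping, and it also immediately gives $(D(T_n)/\mathcal{A}_2)^{-1}=-\tfrac{1}{2}I_{n-2}$.

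For the determinant I would bring $D(T_n)$ to block lower-triangular form by a determinant-preserving block column operation, namely right multiplication by the unit upper-triangular block matrix whose $(1,2)$ block is $-\mathcal{A}_2^{-1}J_{2\times(n-2)}$; this replaces the $(1,2)$ block by $\mathbf{0}$ and leaves the $(2,2)$ block equal to $D(T_n)/\mathcal{A}_2$. Proposition~\ref{prop:blockdet} then yields $\det D(T_n)=\det \mathcal{A}_2\cdot\det\bigl(D(T_n)/\mathcal{A}_2\bigr)=(-1)\cdot\det(-2I_{n-2})=(-1)(-2)^{n-2}=(-1)^{n-1}2^{n-2}$. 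In particular $\det D(T_n)\neq 0$, so $D(T_n)$ is invertible and Proposition~\ref{prop:schur} is applicable.

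For the inverse I would substitute $B_{11}^{-1}=\mathcal{A}_2$ and $(D(T_n)/\mathcal{A}_2)^{-1}=-\tfrac{1}{2}I_{n-2}$ into the four blocks of Proposition~\ref{prop:schur} and simplify each via Remark~\ref{rem-new}. The two off-diagonal blocks reduce at once through $\mathcal{A}_2 J_{2\times(n-2)}=J_{2\times(n-2)}$ and $J_{(n-2)\times 2}\mathcal{A}_2=J_{(n-2)\times 2}$, giving $\tfrac{1}{2}J_{2\times(n-2)}$ and $\tfrac{1}{2}J_{(n-2)\times 2}$, while the bottom-right block is just $-\tfrac{1}{2}I_{n-2}$. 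The top-left block becomes $\mathcal{A}_2-\tfrac{n-2}{2}\,\mathcal{A}_2 J_2 \mathcal{A}_2$ after applying $J_{2\times(n-2)}J_{(n-2)\times 2}=(n-2)J_2$, and then $\mathcal{A}_2 J_2 \mathcal{A}_2=J_2$ collapses it to $\mathcal{A}_2-\tfrac{n-2}{2}J_2$, matching the claimed formula. I do not expect a genuine obstacle beyond careful bookkeeping; the only point deserving a remark is the degenerate case $n=3$, where $n-2=1$ and the all-ones blocks are $1\times 1$, but the identities of Remark~\ref{rem-new} remain valid and the computation goes through verbatim.
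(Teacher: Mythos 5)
Your proposal is correct and follows essentially the same route as the paper: the determinant via a block column operation reducing $D(T_n)$ to block lower-triangular form with diagonal blocks $\mathcal{A}_2$ and the Schur complement $-2I_{n-2}$ (the paper's explicit column operation of subtracting the sum of the first two columns from the rest is exactly your right multiplication by the unit block-triangular matrix, since $\mathcal{A}_2^{-1}J_{2\times(n-2)}=J_{2\times(n-2)}$), and the inverse via Proposition~\ref{prop:schur} with the same pivot $\mathcal{A}_2$ and the same simplifications from Remark~\ref{rem-new}. All computations check out, including the key cancellation $2(J_{n-2}-I_{n-2})-2J_{n-2}=-2I_{n-2}$.
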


\begin{proof}
We first add the first two columns of the distance matrix $D(T_n)$ and  subtract this from  third column onwards to get 
a matrix of the form
$$
\left[
\begin{array}{c|c}
\mathcal{A}_2 & \mathbf{0}_{2\times (n-2)} \\
\midrule
J_{(n-2)\times 2} &-2I_{n-2}
\end{array}
\right].
$$ 
Proposition~\ref{prop:blockdet} yields the result for the determinant of $D(T_n)$. Note that, for $n \geq 3$, 
$\det D(T_n)$ is non-zero, and consequently the inverse of $D(T_n)$ exists. To calculate the inverse 
of $D(T_n)$, we use Schur complements. The Schur complement of $\mathcal{A}_2$ in $D(T_n)$ is given by  
$D(T_n)/\mathcal{A} = 2(J_{n-2}-I_{n-2}) - J_{(n-2)\times 2}\mathcal{A}_2^{-1}J_{(n-2)\times 2 }$. 
Using Lemma~\ref{lem:aI+bJ}, Remark~\ref{rem-new} and Proposition~\ref{prop:schur}, we have
\begin{align*}
D(T_n)^{-1}=&  \left[
\begin{array}{c|c}
\mathcal{A}_2^{-1} + \mathcal{A}_2^{-1}J_{2\times (n-2)}(D(T_n)/\mathcal{A}_2)^{-1})J_{(n-2)\times 2 }\mathcal{A}_2^{-1} & \mathcal{A}_2^{-1}J_{2\times (n-2)}(D(T_n)/\mathcal{A}_2)^{-1} \\
\midrule
- (D(T_n)/\mathcal{A}_2)^{-1} J_{(n-2)\times 2 } \mathcal{A}_2^{-1} & (D(T_n)/\mathcal{A}_2)^{-1}
\end{array}
\right]  \\
   =& \left[
\begin{array}{c|c}
\mathcal{A}_2-\frac{n-2}{2}J_2 & \frac{1}{2}J_{2\times (n-2)} \\
\midrule
\frac{1}{2}J_{(n-2)\times 2 } & -\frac{1}{2}I_{n-2}
\end{array}
\right].
\end{align*}
\end{proof}

Similar to the result for trees, the next corollary  gives  another representation for the $D(T_n)^{-1}$ in terms of the Laplacian matrix $L(T_n)$ of $T_n$, which is given by

$$L(T_n)=\left[
\begin{array}{c|c}
(n-1)I_2- \mathcal{A}_2 & -J_{2\times (n-2)} \\
\midrule
- J_{(n-2)\times 2 } & 2I_{n-2}
\end{array}
\right]$$

\begin{cor}
Let $D(T_n)$ and $L(T_n)$ be the distance and Laplacian matrices, respectively, of the graph $T_n$. Then,
$$
D(T_n)^{-1} = -\frac{1}{2} L(T_n) + \frac{1}{2} J_n + \frac{1}{2} \mathcal{R}(T_n),
$$
where
\begin{equation}\label{eqn:R_nT_n}
\mathcal{R}(T_n)=\left[
\begin{array}{c|c}
-(n-2) \mathcal{A}_2 & -J_{2\times (n-2)} \\
\midrule
- J_{(n-2)\times 2 } & I_{n-2} - J_{n-2}
\end{array}
\right].
\end{equation}

\end{cor}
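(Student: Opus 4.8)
The plan is to verify the claimed identity blockwise in the $2 + (n-2)$ partition already used for $D(T_n)$ in Equation~(\ref{eqn: D(T_n)}), taking the explicit formula for $D(T_n)^{-1}$ from Theorem~\ref{thm:det_Tn} as the target. First I would record the block decomposition of each matrix on the right-hand side: $L(T_n)$ and $\mathcal{R}(T_n)$ are already displayed in this partition, and $J_n$ decomposes into the $2\times 2$ array with blocks $J_2$, $J_{2\times(n-2)}$, $J_{(n-2)\times 2}$ and $J_{n-2}$. Since matrix addition and scalar multiplication act blockwise, it then suffices to check the four block identities separately, comparing the sum $-\tfrac12 L(T_n) + \tfrac12 J_n + \tfrac12\mathcal{R}(T_n)$ against $D(T_n)^{-1}$ in each position.

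For three of the four blocks the verification is immediate. In the top-right position the contributions are $-\tfrac12(-J_{2\times(n-2)})$ from $-\tfrac12 L(T_n)$, $\tfrac12 J_{2\times(n-2)}$ from $\tfrac12 J_n$, and $\tfrac12(-J_{2\times(n-2)})$ from $\tfrac12\mathcal{R}(T_n)$, which sum to $\tfrac12 J_{2\times(n-2)}$, matching $D(T_n)^{-1}$; the bottom-left block follows by symmetry. For the bottom-right block, the sum $-\tfrac12(2I_{n-2}) + \tfrac12 J_{n-2} + \tfrac12(I_{n-2}-J_{n-2})$ collapses to $-\tfrac12 I_{n-2}$, as required.

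The only block needing a little care is the top-left one, and this is where I expect the main (albeit minor) obstacle to lie. Summing the three contributions there gives $-\tfrac{n-1}{2}I_2 + \tfrac12\mathcal{A}_2 + \tfrac12 J_2 - \tfrac{n-2}{2}\mathcal{A}_2$, whereas the target block from Theorem~\ref{thm:det_Tn} is $\mathcal{A}_2 - \tfrac{n-2}{2}J_2$. These two expressions are not manifestly equal, since one is written in terms of $I_2$ and $\mathcal{A}_2$ while the other mixes $\mathcal{A}_2$ with $J_2$. The key observation reconciling them is the identity $J_2 = I_2 + \mathcal{A}_2$; substituting it into both expressions reduces each of them to $-\tfrac{n-2}{2}I_2 + \tfrac{4-n}{2}\mathcal{A}_2$. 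Having matched all four blocks, the asserted identity $D(T_n)^{-1} = -\tfrac12 L(T_n) + \tfrac12 J_n + \tfrac12\mathcal{R}(T_n)$ follows.
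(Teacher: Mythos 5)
Your blockwise verification is correct in all four blocks, including the top-left one where the identity $J_2 = I_2 + \mathcal{A}_2$ indeed reconciles the two expressions to $-\tfrac{n-2}{2}I_2 + \tfrac{4-n}{2}\mathcal{A}_2$. The paper states this corollary without proof, evidently intending exactly this direct comparison against the explicit formula for $D(T_n)^{-1}$ in Theorem~\ref{thm:det_Tn}, so your argument is essentially the paper's own (implicit) one, carried out in full.
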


The following remark is interesting in its own right. The general case will be discussed in Section \ref{sec:Eigen}.

\begin{rem}
Consider the matrix $\mathcal{R}(T_n)$. Let $B = \{1,2\}$ be the set of base vertices and $N = \{3,4,\cdots,n\}$ be 
the set of non-base vertices of $T_n$. It is easy to compute the eigenvalues of the principal submatrices 
$\mathcal{R}(T_n)[B]$ and $\mathcal{R}(T_n)[N]$ of $\mathcal{R}(T_n)$, corresponding to the vertex sets $B$ and $N$. 
Since the eigenvalues of $\mathcal{A}_2$ are $\pm 1$, it follows that the eigenvalues of $\mathcal{R}(T_n)[B]$ are 
$\mp (n-2)$, each with multiplicity $1$. Further, by Eqn.~(\ref{eqn:R_nT_n}),  $\mathcal{R}(T_n)[N] = I_{n-2} - J_{n-2}$. 
It follows from Lemma \ref{lem:aI+bJ} that the eigenvalues of $\mathcal{R}(T_n)[N]$ are $1-n$ and $1$ with 
multiplicities $1$ and $n-3$, respectively.
\end{rem}

%

We now find the determinant and inverse of the distance matrix of a complete bipartite graph. These results have appeared in the work of Hou et al~\cite{Hou3}, but the proofs and techniques given in this article are completely different than theirs. 
Let $K_{m,n}$ be the complete bipartite graph with $m + n$ vertices. Let $\{1,2,\ldots,m\}$ and $\{m+1,m+2,\ldots,m+n\}$ be the vertex set partition of $K_{m,n}$. Then the distance matrix of $K_{m,n}$ is in the following block form:
\begin{small}
\begin{equation}\label{D_Kmn}
D(K_{m,n})=\left[
\begin{array}{c|c}
2(J_m-I_m) & J_{m\times n} \\
\midrule
J_{n \times m} & 2(J_n-I_n)
\end{array}
\right].
\end{equation}
\end{small}

\begin{theorem}\label{Thm:KmnDet}
Let $D(K_{m,n})$ be the distance matrix of $K_{m,n}$. Then the determinant of $D(K_{m,n})$ is given by
$$\det D(K_{m,n}) = (-2)^{m+n-2} \times \left(4(m-1)(n-1) - mn \right).$$
Moreover, $\det D(K_{m,n})= 0$ if and only if $m=n=2$.
\end{theorem}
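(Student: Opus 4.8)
The plan is to compute $\det D(K_{m,n})$ directly by row and column operations, following the same strategy used for $D(T_n)$ above, and then to analyze when the resulting scalar factor vanishes. First I would exploit the $2(J-I)$ block structure: each diagonal block $2(J_k - I_k) = -2I_k + 2J_k$ is, by Lemma~\ref{lem:aI+bJ}, a rank-one perturbation of a scalar matrix, which makes the determinant tractable. The cleanest route is to perform column operations to create zero blocks so that Proposition~\ref{prop:blockdet} applies, exactly mirroring the proof of Theorem~\ref{thm:det_Tn}.

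\textbf{Approach via Schur complement.} Concretely, I would take $B_{11} = 2(J_m - I_m)$, which is invertible for $m \neq 1$ (its eigenvalues are $2(m-1)$ and $-2$), and compute the Schur complement $D(K_{m,n})/B_{11} = 2(J_n - I_n) - J_{n\times m} B_{11}^{-1} J_{m \times n}$. Using Lemma~\ref{lem:aI+bJ} to write $B_{11}^{-1} = \frac{1}{-2}\left(I_m - \frac{1}{2(m-1)-m} \cdot (\text{correction}) \right)$, wait--more carefully, with $a = -2$, $b = 2$, one has $(2J_m - 2I_m)^{-1} = -\frac{1}{2}\left(I_m + \frac{1}{m-1}\cdot(-1)\cdots\right)$; I would simplify this and observe that $J_{n\times m} B_{11}^{-1} J_{m\times n}$ is a scalar multiple of $J_n$, since $J_{n\times m} M J_{m \times n} = (\mathds{1}_m^t M \mathds{1}_m) J_n$ for any $M$. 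Thus the Schur complement is again of the form $\alpha I_n + \beta J_n$, whose determinant is given explicitly by Lemma~\ref{lem:aI+bJ}. Multiplying $\det B_{11}$ by $\det(D/B_{11})$ via the block factorization then yields the product, and I expect the algebra to collapse to $(-2)^{m+n-2}(4(m-1)(n-1) - mn)$.

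\textbf{Alternative via direct elimination.} A parallel and perhaps more symmetric route avoids inverting a block: subtract suitable columns to clear entries. Since the off-diagonal blocks are $J_{m\times n}$ and $J_{n\times m}$, I would subtract column $1$ from columns $2,\dots,m$ and subtract column $m+1$ from columns $m+2,\dots,m+n$; in each diagonal block this turns $2(J_k - I_k)$ into a structured matrix with many $\pm 2$ entries and makes the off-diagonal blocks sparse. Iterating these reductions (together with analogous row operations) should triangularize the bulk of the matrix, leaving a small $2\times 2$ determinant that carries the factor $4(m-1)(n-1) - mn$, while the eliminated pivots contribute the power of $-2$. I would track the sign and count pivots carefully to land on the exponent $m+n-2$.

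\textbf{The vanishing condition.} Once the determinant is established as $(-2)^{m+n-2}(4(m-1)(n-1) - mn)$, the final claim reduces to solving $4(m-1)(n-1) = mn$ over positive integers. Expanding gives $4mn - 4m - 4n + 4 = mn$, i.e. $3mn - 4m - 4n + 4 = 0$. I would solve this by the standard Simon's-favorite-factoring trick: multiply by $3$ to get $9mn - 12m - 12n + 12 = 0$, then $(3m-4)(3n-4) = 16 - 12 = 4$. The positive-integer factorizations of $4$ as $(3m-4)(3n-4)$ with $3m-4, 3n-4 \equiv -1 \pmod 3$ force $3m-4 = 3n-4 = 2$, hence $m = n = 2$. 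I would also note the degenerate small cases ($m=1$ or $n=1$, which give star graphs / trees) do not satisfy the equation, so the only solution is $m=n=2$, completing the proof. The main obstacle I anticipate is purely bookkeeping: keeping the scalar coefficients from Lemma~\ref{lem:aI+bJ} exact through the Schur-complement simplification so that the factor emerges as precisely $4(m-1)(n-1) - mn$ rather than an equivalent-looking but mis-scaled expression.
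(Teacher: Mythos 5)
Your proposal is correct, but your primary route is genuinely different from the paper's. The paper proves this theorem purely by elementary operations: within each part of the bipartition it subtracts the first column from the remaining ones and adds all rows to the first row, then repartitions the result as a block triangular matrix whose lower-right block is $-2I_{n-1}$, and applies Proposition~\ref{prop:blockdet}, computing $\det \widetilde{A}_{11} = (-2)^{m-1}\left(4(m-1)(n-1)-mn\right)$ by cofactor expansion along the first row. That is essentially your ``alternative via direct elimination.'' Your main route via the Schur complement also works and is cleaner computationally: with $B_{11} = 2(J_m - I_m)$, Lemma~\ref{lem:aI+bJ} gives $B_{11}^{-1} = \tfrac{1}{2}\left(\tfrac{1}{m-1}J_m - I_m\right)$, hence $J_{n\times m}B_{11}^{-1}J_{m\times n} = \tfrac{m}{2(m-1)}J_n$, so the Schur complement is $-2I_n + \tfrac{3m-4}{2(m-1)}J_n$, and
$$\det B_{11} \cdot \det\left(D/B_{11}\right) = (-2)^{m-1}\,2(m-1)\cdot(-2)^{n-1}\,\frac{3mn-4m-4n+4}{2(m-1)} = (-2)^{m+n-2}\left(4(m-1)(n-1)-mn\right),$$
using $3mn-4m-4n+4 = 4(m-1)(n-1)-mn$. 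Amusingly, this exact Schur complement appears in the paper in Eqn~(\ref{eqn:schur-bipartite}), but only in the service of the inverse (Lemma~\ref{lemma:Kmn}), never the determinant. What your route buys is closed-form algebra with no pivot bookkeeping; what it costs is two caveats you should make explicit: (i) it requires $m \neq 1$ (for $m=1$, $B_{11}=0$ is singular), so the star cases $m=1$ or $n=1$ must be handled by your elimination route or a direct check (the formula does remain valid there, giving $(-2)^{n-1}(-n)$ for $K_{1,n}$, consistent with the tree determinant); and (ii) you invoke the determinant identity $\det B = \det B_{11}\det(B/B_{11})$, which the paper never states (it records only Propositions~\ref{prop:schur} and~\ref{prop:blockdet}), though it follows at once from the standard block factorization and Proposition~\ref{prop:blockdet}. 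Finally, your analysis of the vanishing condition via $(3m-4)(3n-4)=4$ is correct and in fact more careful than the paper's, which dismisses the claim as easy; to be airtight, note that since $m,n\geq 1$ the only possible negative factor is $3m-4=-1$ (i.e.\ $m=1$), which would force $3n-4=-4$, i.e.\ $n=0$, impossible --- your separate observation that $m=1$ yields $\det D = (-2)^{n-1}(-n) \neq 0$ closes this case.
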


\begin{proof}
Starting with the distance matrix of $K_{m,n}$ as in Eqn~\eqref{D_Kmn} we  use elementary row or column operations to obtain the determinant. For each partition we subtract the first column from the remaining columns and then
add all the rows to the first row. After applying the above steps on $D(K_{m,n})$, the resulting  matrix $\widetilde{D}(K_{m,n})$ has the following block matrix form:
\begin{small}
$$\widetilde{D}(K_{m,n})= \left[
\begin{array}{c|c|c|c}
2(m-1) & \mathbf{0}_{1 \times (m-1)} & m &  \mathbf{0}_{1 \times (n-1)} \\
  \hline
2 \mathds{1}_{(m-1) \times 1} &  -2 I_{m-1} & \mathds{1}_{(m-1) \times 1} &  \mathbf{0}_{(m-1) \times (n-1)}\\
  \midrule
n & \mathbf{0}_{1 \times (m-1)}& 2(n-1)& \mathbf{0}_{1 \times (n-1)} \\
  \hline
\mathds{1}_{(n-1) \times 1} & \mathbf{0}_{(n-1) \times (m-1)}& 2 \mathds{1}_{(n-1) \times 1}& -2 I_{n-1}
\end{array}
\right].$$
\end{small}
We repartition the matrix $\widetilde{D}(K_{m,n})$ so that
$\widetilde{D}(K_{m,n})= \left[
\begin{array}{c|c}
\widetilde{A}_{11} & \bigzero\\
  \midrule
* &  \widetilde{A}_{22}
\end{array}
\right],$ where
\begin{small}
$$\widetilde{A}_{11} =  \left[
\begin{array}{c|c|c}
2(m-1)                        & \mathbf{0}_{1 \times (m-1)} & m\\
  \midrule
2 \mathds{1}_{(m-1) \times 1} &  -2 I_{m-1}        & \mathds{1}_{(m-1) \times 1}\\
 \midrule
n                             & \mathbf{0}_{1 \times (m-1)} &  2(n-1)
\end{array}
\right]$$
\end{small}
and $\widetilde{A}_{22} = -2 I_{n-1}$. The $\det \widetilde{A}_{11} = (-2)^{m-1} \times \left(4(m-1)(n-1) - mn \right),$ by expanding along the first row. Thus, using Proposition~\ref{prop:blockdet}, we get
$$\det \widetilde{D}(K_{m,n}) = (\det \widetilde{A}_{11})( \det \widetilde{A}_{22}) = (-2)^{m-1} \times \left(4(m-1)(n-1) - mn \right) \times (-2)^{n-1}.$$
It is easy to see that the necessary and sufficient  condition  for  $\det D(K_{m,n})=0$ is $m=n=2.$
\end{proof}

\begin{lem}\label{lemma:star}
Let $n \neq 1$ and $D(K_{n,1})$ be the distance matrix of the star graph $K_{n,1}$. Then,
\begin{small}
 \[
D(K_{n,1})^{-1}=\left[
\begin{array}{c|c}
 \dfrac{1}{2n}J_n-\dfrac{1}{2}I_n & \dfrac{1}{n} \mathds{1}_{n \times 1} \\
  \midrule
\dfrac{1}{n} \mathds{1}_{1 \times n} &  - \dfrac{2(n-1)}{n}
\end{array}
\right].
\]
\end{small}
\end{lem}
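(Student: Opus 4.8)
The plan is to mirror the Schur-complement argument used for $D(T_n)$ in Theorem~\ref{thm:det_Tn}. First I would record the block form of $D(K_{n,1})$ obtained by specializing Eqn~\eqref{D_Kmn}: since the star $K_{n,1}$ has an $n$-element set of leaves (pairwise at distance $2$) together with one central vertex at distance $1$ from each leaf, the block $2(J_1-I_1)$ collapses to $0$ and
\[
D(K_{n,1})=\left[
\begin{array}{c|c}
2(J_n-I_n) & \mathds{1}_{n\times 1}\\
\midrule
\mathds{1}_{1\times n} & 0
\end{array}
\right].
\]
I take $B_{11}=2(J_n-I_n)=-2I_n+2J_n$. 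By Lemma~\ref{lem:aI+bJ} with $a=-2$, $b=2$, this matrix is invertible precisely when $a+nb=2(n-1)\neq 0$, which is exactly the hypothesis $n\neq 1$, and its inverse is $B_{11}^{-1}=-\tfrac12 I_n+\tfrac{1}{2(n-1)}J_n$. This places us in position to apply Proposition~\ref{prop:schur}.

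Second, I would compute the (scalar) Schur complement $D(K_{n,1})/B_{11}=0-\mathds{1}_{1\times n}B_{11}^{-1}\mathds{1}_{n\times 1}$. The key intermediate quantity is $B_{11}^{-1}\mathds{1}_{n\times 1}$: using $J_n\mathds{1}_{n\times1}=n\,\mathds{1}_{n\times1}$ one finds $B_{11}^{-1}\mathds{1}_{n\times1}=\tfrac{1}{2(n-1)}\mathds{1}_{n\times1}$, so the Schur complement equals the nonzero scalar $-\tfrac{n}{2(n-1)}$ and $(D(K_{n,1})/B_{11})^{-1}=-\tfrac{2(n-1)}{n}$. This already produces the bottom-right entry of the claimed inverse, and the two off-diagonal blocks follow at once from Proposition~\ref{prop:schur}: the top-right block is $-B_{11}^{-1}\mathds{1}\,(D/B_{11})^{-1}$ and the bottom-left is its transpose, and substituting $B_{11}^{-1}\mathds{1}=\tfrac{1}{2(n-1)}\mathds{1}$ together with $(D/B_{11})^{-1}=-\tfrac{2(n-1)}{n}$ collapses the coefficients to $\tfrac1n$, giving $\tfrac1n\mathds{1}_{n\times1}$ and $\tfrac1n\mathds{1}_{1\times n}$.

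Finally, the top-left block is $B_{11}^{-1}+B_{11}^{-1}\mathds{1}\,(D/B_{11})^{-1}\,\mathds{1}^t B_{11}^{-1}$. Since $B_{11}^{-1}\mathds{1}=\tfrac{1}{2(n-1)}\mathds{1}$ and $\mathds{1}^t B_{11}^{-1}=\tfrac{1}{2(n-1)}\mathds{1}^t$ (by symmetry of $B_{11}^{-1}$), the rank-one correction is a multiple of $J_n$, and the main bookkeeping step is to pin down its scalar: the correction equals $-\tfrac{1}{2n(n-1)}J_n$, so the top-left block is $-\tfrac12 I_n+\bigl(\tfrac{1}{2(n-1)}-\tfrac{1}{2n(n-1)}\bigr)J_n$. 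I expect the only delicate point of the whole argument to be this last simplification, where the two $J_n$-coefficients must combine to exactly $\tfrac{1}{2n}$ via $\tfrac{1}{2(n-1)}\bigl(1-\tfrac1n\bigr)=\tfrac{1}{2n}$; this recovers $\tfrac{1}{2n}J_n-\tfrac12 I_n$ and completes the identification with the asserted matrix. As an independent sanity check one could instead verify the formula directly, by multiplying $D(K_{n,1})$ against the proposed inverse and using the identities $J_n\mathds{1}=n\,\mathds{1}$ and $J_n^2=nJ_n$ from Remark~\ref{rem-new}; this route avoids invoking invertibility of $B_{11}$ but requires the same scalar arithmetic.
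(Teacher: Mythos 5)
Your proposal is correct and follows essentially the same route as the paper: the paper also applies Proposition~\ref{prop:schur} with $B_{11}=2(J_n-I_n)$, invoking Lemma~\ref{lem:aI+bJ} for $B_{11}^{-1}$ and computing the scalar Schur complement $-\tfrac{n}{2(n-1)}$. You have merely filled in the block-by-block arithmetic that the paper leaves implicit, and all of those computations check out.
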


\begin{proof}
By Lemma \ref{lem:aI+bJ}, the matrix $2(J_n-I_n)$ is invertible, and therefore the Schur complement of $2(J_n-I_n)$ in 
$D(K_{n,1})$ is given by $D(K_{n,1})/2(J_n-I_n) = - \dfrac{n}{2(n-1)}$. Then by using Proposition \ref{prop:schur}, 
the required result follows.
\end{proof}

\begin{lem}\label{lemma:Kmn}
 Let $m,n \neq 1$ and $m=n\neq2$. Let $D(K_{m,n})$ be the distance matrix of the complete bipartite graph $K_{m,n}$. Then,
 \begin{small}
  \[
D(K_{m,n})^{-1}=\left[
\begin{array}{c|c}
 \dfrac{3n-4}{2(3mn-4(m+n-1))}J_m-\dfrac{1}{2}I_m & -\dfrac{1}{3mn-4(m+n-1)}J_{m\times n} \\
  \midrule
-\dfrac{1}{3mn-4(m+n-1)}J_{n\times m} &  \dfrac{3m-4}{2(3mn-4(m+n-1))}J_n-\dfrac{1}{2}I_n
\end{array}
\right].
\] 
 \end{small}
\end{lem}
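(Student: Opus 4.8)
The plan is to compute $D(K_{m,n})^{-1}$ by Schur complements, in exactly the same spirit as the proof of Lemma~\ref{lemma:star}, starting from the block form in Eqn~\eqref{D_Kmn}. Write $B_{11}=2(J_m-I_m)$, $B_{12}=J_{m\times n}$, $B_{21}=J_{n\times m}$ and $B_{22}=2(J_n-I_n)$. First I would invert the pivot block: since $B_{11}=-2I_m+2J_m$ has $a=-2\neq 0$ and $a+mb=2(m-1)\neq 0$ whenever $m\neq 1$, Lemma~\ref{lem:aI+bJ} gives $B_{11}^{-1}=-\frac12 I_m+\frac{1}{2(m-1)}J_m$.

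Next I would form the Schur complement $B/B_{11}=B_{22}-B_{21}B_{11}^{-1}B_{12}$. Using the product rules from Remark~\ref{rem-new} (namely $J_{r\times t}J_{t\times s}=tJ_{r\times s}$, and in particular $J_mJ_{m\times n}=mJ_{m\times n}$ and $J_{n\times m}J_{m\times n}=mJ_n$), the cross term collapses to a scalar multiple of $J_n$, and I expect $B/B_{11}=-2I_n+\frac{3m-4}{2(m-1)}J_n$, again of the form $aI_n+bJ_n$. By Lemma~\ref{lem:aI+bJ} this is invertible exactly when $a+nb=\frac{3mn-4(m+n-1)}{2(m-1)}\neq 0$; for $m=n\neq 2$ one has $3mn-4(m+n-1)=(3n-2)(n-2)\neq 0$, consistent with Theorem~\ref{Thm:KmnDet}, so the inverse exists. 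Lemma~\ref{lem:aI+bJ} then yields $(B/B_{11})^{-1}=-\frac12 I_n+\frac{3m-4}{2(3mn-4(m+n-1))}J_n$, which is already the claimed $(2,2)$ block.

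Finally I would substitute into Proposition~\ref{prop:schur}. The two auxiliary products simplify to $B_{11}^{-1}B_{12}=\frac{1}{2(m-1)}J_{m\times n}$ and, since $B_{21}=B_{12}^{t}$ and $B_{11}$ is symmetric, $B_{21}B_{11}^{-1}=\frac{1}{2(m-1)}J_{n\times m}$. Writing $\Delta=3mn-4(m+n-1)$, the decisive off-diagonal simplification is $J_{m\times n}(B/B_{11})^{-1}=\frac{2(m-1)}{\Delta}J_{m\times n}$, after which the $(1,2)$ and $(2,1)$ blocks become $-\frac{1}{\Delta}J_{m\times n}$ and its transpose, as claimed. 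For the $(1,1)$ block the product $B_{11}^{-1}+B_{11}^{-1}B_{12}(B/B_{11})^{-1}B_{21}B_{11}^{-1}$ reduces to $-\frac12 I_m+\frac{\Delta+n}{2(m-1)\Delta}J_m$, and the identity $\Delta+n=(3n-4)(m-1)$ converts this into the stated $\frac{3n-4}{2\Delta}J_m-\frac12 I_m$.

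All the steps are routine matrix algebra within the algebra generated by the identity and the all-ones blocks, so there is no conceptual obstacle. The only part demanding care is the bookkeeping of the scalar coefficients—specifically the two collapsing identities $-\Delta+n(3m-4)=4(m-1)$ and $\Delta+n=(3n-4)(m-1)$, which are precisely what cancel the $(m-1)$ factors and produce the symmetric denominator $3mn-4(m+n-1)$. Note that the hypothesis $m=n\neq 2$ (together with $m,n\neq 1$) is used only to guarantee that both pivot blocks are invertible; the displayed formula itself is valid for any $m,n\neq 1$ with $3mn-4(m+n-1)\neq 0$.
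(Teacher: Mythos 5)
Your proposal is correct, and it follows the paper's overall skeleton — block form \eqref{D_Kmn}, a Schur complement, and assembly via Proposition~\ref{prop:schur} — but it differs in two details worth noting. First, you pivot on the upper-left block $2(J_m-I_m)$, whereas the paper pivots on the lower-right block $2(J_n-I_n)$; under the hypothesis $m=n$ this is immaterial. Second, and more substantively, you invert the Schur complement $-2I_n+\frac{3m-4}{2(m-1)}J_n$ by a direct application of Lemma~\ref{lem:aI+bJ}, while the paper writes it as $A+B$ with $A=2(J_m-I_m)$ and $B=-\frac{n}{2(n-1)}J_m$ and invokes Miller's rank-one update formula (Theorem~\ref{thm:A+B-inverse}) to compute $(A+B)^{-1}$. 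Your route is the leaner one: since the Schur complement is already of the form $aI+bJ$ — a fact the paper itself uses to settle invertibility before nonetheless routing the inversion through Miller's theorem — the rank-one update machinery, the trace computation $g=\operatorname{trace}(BA^{-1})$, and the product $A^{-1}BA^{-1}$ are all avoidable. Your scalar bookkeeping checks out: with $\Delta=3mn-4(m+n-1)$, the identities $-\Delta+n(3m-4)=4(m-1)$ and $\Delta+n=(3n-4)(m-1)$ are exactly what collapse the off-diagonal and $(1,1)$ blocks to the stated forms, and $3n^2-8n+4=(3n-2)(n-2)$ confirms invertibility for $m=n\neq 2$. Your closing observation — that the displayed formula is valid for any $m,n\neq 1$ with $\Delta\neq 0$, the hypothesis $m=n\neq 2$ serving only to guarantee this — is a small gain in generality over the statement as given, and is consistent with Theorem~\ref{Thm:KmnDet}.
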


\begin{proof}
By Lemma \ref{lem:aI+bJ}, the matrix $2(J_n-I_n)$ is invertible and so the Schur complement of $2(J_n-I_n)$ in 
$D(K_{m,n})$ is given by
\begin{align}\label{eqn:schur-bipartite}
D(K_{m,n})/2(J_n-I_n)=&  2(J_m-I_m)-J_{m\times n}[2(J_n-I_n)]^{-1}J_{n\times m}\\ \nonumber
                     =& 2(J_m-I_m)- J_{m\times n}\left[\frac{1}{2}\left(\frac{1}{n-1}J_n-I_n)\right)\right]
                        J_{n\times m}\\ \nonumber
                     =& 2(J_m-I_m)-\frac{n}{2(n-1)}J_m = A+B,
\end{align}
where $A=2(J_m-I_m)$ and $B=-\dfrac{n}{2(n-1)}J_m$.
Note that, $A+B=-2 I_m+ \dfrac{3n-4}{2(n-1)}J_m$ is of the form $aI_m + bJ_m$, where $a+mb = 0$ if and only if 
$m=n=2$. By Lemma~\ref{lem:aI+bJ}, $A+B$ is invertible if and only if  $m=n\neq 2$.

Note that $BA^{-1}= -\dfrac{n}{4(n-1)(m-1)}J_m$ and $ g=trace (BA^{-1})=-\dfrac{mn}{4(n-1)(m-1)}$. Therefore, 
$1+g= \dfrac{3mn-4(m+n-1)}{4(n-1)(m-1)}$ and $A^{-1}BA^{-1} = - \dfrac{n}{8(n-1)(m-1)^2}J_m$. Since, 
$1+g\neq 0$ for $m=n\neq 2$, we have from Eqn~(\ref{eqn:schur-bipartite}) and Theorem~\ref{thm:A+B-inverse}, 
that,
 $$(D(K_{m,n})/2(J_n-I_n))^{-1} = (A+B)^{-1} = \dfrac{3n-4}{2(3mn-4(m+n-1))}J_m-\frac{1}{2}I_m.$$
Finally, Proposition \ref{prop:schur}  and  Lemma~\ref{lem:aI+bJ} yields the required form of $D(K_{m,n})^{-1}$.
\end{proof}

\begin{theorem}
Let   $m=n\neq2$ and $D(K_{m,n})$ be the distance matrix of the complete bipartite graph $K_{m,n}$. Then,
\begin{small}
  \[
D(K_{m,n})^{-1}=\left[
\begin{array}{c|c}
 \dfrac{3n-4}{2(3mn-4(m+n-1))}J_m-\frac{1}{2}I_m & -\dfrac{1}{3mn-4(m+n-1)}J_{m\times n} \\
  \midrule
-\dfrac{1}{3mn-4(m+n-1)}J_{n\times m} &  \dfrac{3m-4}{2(3mn-4(m+n-1))}J_n-\frac{1}{2}I_n
\end{array}
\right].
\]
\end{small}
\end{theorem}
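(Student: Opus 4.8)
The plan is to recognize that the displayed formula is identical to the conclusion of Lemma~\ref{lemma:Kmn}, so the theorem is essentially a restatement of that lemma and its proof reduces to a citation. Concretely, for every admissible pair with $m=n\geq 3$ the hypotheses $m,n\neq 1$ and $m=n\neq 2$ of Lemma~\ref{lemma:Kmn} are satisfied, and the stated inverse is exactly what that lemma produces, so no new work is required in that range.

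The only point where the two statements differ is the scope of the hypothesis: Lemma~\ref{lemma:Kmn} excludes $m=n=1$ through the condition $m,n\neq 1$, whereas the theorem allows $m=n\neq 2$, which includes the edge $K_{1,1}$. I would close this gap by a direct check. When $m=n=1$ the graph $K_{1,1}$ is a single edge, $D(K_{1,1})=\mathcal{A}_2$, which is its own inverse; substituting $m=n=1$ into the formula gives $3mn-4(m+n-1)=-1$, so the diagonal blocks become $\tfrac12 J_1-\tfrac12 I_1 = 0$ and the off-diagonal blocks become $-\tfrac{1}{-1}J_1 = 1$, recovering $\mathcal{A}_2$. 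Hence the formula is valid on the full range $m=n\neq 2$.

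If instead one wanted a self-contained argument rather than a pointer to Lemma~\ref{lemma:Kmn}, I would repeat its mechanism: take the Schur complement of the invertible block $2(J_n-I_n)$ (invertible by Lemma~\ref{lem:aI+bJ}, since $n\neq 1$), reduce it to the form $aI_m+bJ_m$, apply Miller's rank-one update (Theorem~\ref{thm:A+B-inverse}) to invert it---this is precisely where the quantity $3mn-4(m+n-1)$ surfaces as $4(n-1)(m-1)(1+g)$---and then assemble the four blocks through Proposition~\ref{prop:schur} together with Lemma~\ref{lem:aI+bJ}. Since all of this has already been carried out, there is no genuine obstacle; the substantive computation lives in Lemma~\ref{lemma:Kmn}, and the theorem merely records its conclusion at the level of a named result.
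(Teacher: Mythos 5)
Your proposal is correct and takes essentially the same route as the paper, whose proof likewise disposes of $m=n=1$ by noting $D(K_{1,1})=\mathcal{A}_2$ and otherwise simply cites Lemma~\ref{lemma:Kmn} (together with Lemma~\ref{lemma:star}); your explicit check that the formula collapses to $\mathcal{A}_2$ at $m=n=1$ is in fact more careful than the paper's one-line dismissal. The only discrepancy is that the paper also invokes the star-graph Lemma~\ref{lemma:star}, which matters only if the hypothesis ``$m=n\neq 2$'' is read as ``$(m,n)\neq(2,2)$'' (so that unbalanced cases like $K_{m,1}$ are in scope); under your literal reading $m=n$, that citation is indeed unnecessary.
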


\begin{proof}
When $m=n = 1$, $D(K_{m,n}) = \mathcal{A}_2$. When $m,n > 1$ the result follows from Lemmas \ref{lemma:star} and \ref{lemma:Kmn}.
\end{proof}

\section{ Determinant and Inverse of $D(T_n^{(b)})$}\label{sec:Multiple block}

In this section, we compute the determinant of the distance matrix of $T_n^{(b)}$ and find a formula for the inverse 
whenever it exists. Similar to the single block case, the indexing of the vertices plays an important role in our 
results. We discuss the indexing of vertices of $T_n^{(b)}$ below. Recall that $T_n^{(b)}$ is a cp-graph consisting of 
$b$ blocks, where each of the block is $T_n$, with a central cut vertex (see Figure~\ref{fig:T-n-b}). Therefore, 
$|V(T_n^{(b)})| = b(n-1)+1$ and $|E(T_n^{(b)})| = b(2n-3)$.

Let $V(T_n^{(b)})=  \{1,2, \ldots,  b(n-1)+1\}$ be the vertex set of $T_n^{(b)}$, where $b(n-1)+1$ represents the 
cut vertex. Let us write $V(T_n^{(b)})$ as a disjoint union as follows.

\begin{eqnarray*}
V(T_n^{(b)})&= & \{(k-1)(n-1)+i : k=1,2,\ldots, b  \mbox{ and } i=1,2,\ldots, n-1 \} \cup \{ b(n-1)+1\} \nonumber\\
           & = &  \displaystyle \sqcup_{k=1}^{b} \{(k-1)(n-1)+i : i=1,2,\ldots, n-1\} \sqcup \{ b(n-1)+1\},
\end{eqnarray*} 
where $\sqcup$ denotes disjoint union. 
If $G_1,G_2,\ldots, G_b$ are the $b$ blocks of $T_n^{(b)}$, then for each block $G_k, \; 1\leq k\leq b$,
$$V(G_k)= \{(k-1)(n-1)+i : i=1,2,\ldots, n-1\} \cup \{ b(n-1)+1\}.$$
For each block, the first two vertices represent base vertices and the remaining represent the non-base vertices. Let $D(T_n^{(b)})$ be the distance matrix of $T_n^{(b)}$. Then,
\begin{small}
\begin{equation}\label{eqn:Det}
D(T_n^{(b)})=
\left[
\begin{array}{c|c|c|c|c|c|c}
D_1 & D_2 & D_2 &\cdots &D_2 &D_2 & \mathbf{d}_3 \\
\hline
D_2 & D_1 & D_2 &\cdots &D_2 &D_2 & \mathbf{d}_3 \\
\hline
D_2 & D_2 & D_1  &\cdots &D_2 &D_2 & \mathbf{d}_3 \\
\hline
\vdots & \vdots & \vdots & \ddots & \vdots & \vdots & \vdots\\
\hline
D_2 & D_2 & D_2 &\cdots &D_1 &D_2 & \mathbf{d}_3 \\
\hline
D_2 & D_2 & D_2 &\cdots &D_2 &D_1  & \mathbf{d}_3 \\
\hline
\mathbf{d}_3^t & \mathbf{d}_3^t & \mathbf{d}_3^t &\cdots &\mathbf{d}_3^t &\mathbf{d}_3^t & 0 \\
\end{array}
\right]  \ \mbox{ where}
\end{equation}
\end{small}


\noindent $\underline{\textbf{Case 1:}} \, $($n = 3$)
$$
D_1 =  \mathcal{A}_2, \ D_2 = 2J_2 \ \text{and} \
\mathbf{d}_3= \mathds{1}_2.
$$
\noindent $\underline{\textbf{Case 2:}} \, $($n \geq 4$)
$$
D_1 = \left[
\begin{array}{c|c}
\mathcal{A}_2& J_{2 \times (n-3)}\\
\hline
J_{(n-3) \times 2} & 2(J_{n-3}-I_{n-3})
\end{array}
\right],
D_2 = \left[
\begin{array}{c|c}
2J_2 & 3J_{2 \times (n-3)}\\
\hline
3J_{(n-3) \times 2} & 4J_{n-3}\\
\end{array}
\right] \ \text{and} \
\mathbf{d}_3= \left[
\begin{array}{c}
\mathds{1}_2 \\
\hline
2\mathds{1}_{n-3}
\end{array}
\right].$$

\begin{theorem}\label{thm:det_Tn_b}
Let $D(T_n^{(b)})$ be the distance matrix of $T_n^{(b)}$. Then the determinant of $D(T_n^{(b)})$ is given by 
$$\det D(T_n^{(b)}) = (-1)^{b(n-4)+1} \times 2^{b(n-3)+1} \times b \times (n-6)^{b-1}.$$
\end{theorem}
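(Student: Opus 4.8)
The plan is to reduce $D(T_n^{(b)})$ to block-triangular form by elementary block operations that exploit the symmetry among the $b$ blocks, and then to evaluate the two resulting diagonal blocks by nested Schur complements. Writing the matrix as in Eqn~(\ref{eqn:Det}), I would first subtract the first block row from block rows $2,\dots,b$: since each such row carries the same border $\mathbf{d}_3$ and the same off-diagonal block $D_2$, this clears the last column in those rows and leaves $D_2-D_1$ in the first block column, $D_1-D_2$ in the diagonal block, and zeros elsewhere. Next I would add block columns $2,\dots,b$ to the first block column. This turns the $(1,1)$ block into $D_1+(b-1)D_2$, annihilates the first-column entries of block rows $2,\dots,b$, and replaces the first entry of the last (cut-vertex) row by $b\,\mathbf{d}_3^t$. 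Grouping the first block together with the cut vertex (group $A$) and the remaining $b-1$ blocks (group $B$), the matrix is block-triangular: the $B\times A$ corner is zero, the $B\times B$ corner is $\operatorname{diag}(D_1-D_2,\dots,D_1-D_2)$ with $b-1$ copies, and the $A\times A$ corner is
\[
X=\left[\begin{array}{c|c} D_1+(b-1)D_2 & \mathbf{d}_3\\ \midrule b\,\mathbf{d}_3^t & 0\end{array}\right].
\]
A decisive feature is that this reduction never inverts $D_1-D_2$, so it remains valid even at $n=6$.

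By Proposition~\ref{prop:blockdet}, $\det D(T_n^{(b)})=\det X\cdot\det(D_1-D_2)^{\,b-1}$, and pulling the factor $b$ out of the last row of $X$ gives $\det X=b\,\det\mathcal{M}$, where $\mathcal{M}=\left[\begin{smallmatrix} D_1+(b-1)D_2 & \mathbf{d}_3\\ \mathbf{d}_3^t & 0\end{smallmatrix}\right]$. It then remains to evaluate $\det(D_1-D_2)$ and $\det\mathcal{M}$ for $n\ge 4$; the case $n=3$ has no non-base vertices and is checked directly (giving $\det(D_1-D_2)=3$ and $\det\mathcal{M}=2$, which agree with the $n\ge4$ formulas below). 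For both evaluations I would take Schur complements with respect to the non-base principal block, which in each case has the form $aI+bJ$ and is inverted by Lemma~\ref{lem:aI+bJ}, together with the identities $J_{r\times t}J_{t\times s}=tJ_{r\times s}$ of Remark~\ref{rem-new}. For $D_1-D_2$ the non-base block is $-2(I_{n-3}+J_{n-3})$, with determinant $(-2)^{n-3}(n-2)$; the resulting $2\times2$ Schur complement is $\mathcal{A}_2-\tfrac{2}{n-2}J_2$, whose determinant equals $\tfrac{6-n}{n-2}$, so $\det(D_1-D_2)=(-2)^{n-3}(6-n)$.

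The evaluation of $\det\mathcal{M}$ is the crux. Here the non-base block is $W=(4b-2)J_{n-3}-2I_{n-3}$; writing $D_W=(n-3)(4b-2)-2$ for its nontrivial eigenvalue (the factor $a+nb$ of Lemma~\ref{lem:aI+bJ}), one has $\det W=(-2)^{n-4}D_W$, and since $D_W-(n-3)(4b-2)=-2$, Lemma~\ref{lem:aI+bJ} yields $J_{2\times(n-3)}W^{-1}=\tfrac{1}{D_W}J_{2\times(n-3)}$ and $\mathbf{1}_{n-3}^tW^{-1}=\tfrac{1}{D_W}\mathbf{1}_{n-3}^t$. Taking the Schur complement $\mathcal{M}/W$ collapses everything onto the two base vertices and the cut vertex, producing a $3\times3$ matrix of the form $\left[\begin{smallmatrix}\mathcal{A}_2+pJ_2 & q\mathbf{1}_2\\ q\mathbf{1}_2^t & r\end{smallmatrix}\right]$ with explicit $p,q,r$ depending on $b,n$, whose determinant simplifies to $-r(2p+1)+2q^2$. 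Substituting the values and simplifying, the $D_W^{-2}$ contributions cancel and one is left with $\det(\mathcal{M}/W)=-4/D_W$; hence the $D_W$ cancels against $\det W$ and $\det\mathcal{M}=(-4)(-2)^{n-4}=(-1)^{n-1}2^{n-2}$, remarkably independent of $b$. I expect this cancellation, together with verifying that the intermediate blocks $W$ and $-2(I+J)$ are nonsingular in the relevant ranges so that the Schur complements are legitimate, to be the main obstacle.

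Finally I would assemble $\det D(T_n^{(b)})=b\,[(-2)^{n-3}(6-n)]^{\,b-1}\,(-1)^{n-1}2^{n-2}$ and reconcile it with the claimed closed form. The remaining work is bookkeeping of signs and powers: the power of $2$ is $(n-3)(b-1)+(n-2)=b(n-3)+1$; the factor $(6-n)^{b-1}=(-1)^{b-1}(n-6)^{b-1}$; and the total sign exponent $(b-1)(n-2)+(n-1)$ differs from $b(n-4)+1$ by $2b$, hence has the same parity. This gives $\det D(T_n^{(b)})=(-1)^{b(n-4)+1}\,2^{b(n-3)+1}\,b\,(n-6)^{b-1}$, valid for all $n\ge3$ and $b\ge2$, and in particular vanishing exactly when $n=6$, consistent with Eqn~(\ref{eqn:Dinv}).
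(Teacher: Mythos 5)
Your proposal is correct; I verified the pivotal computations. The block reduction does leave $b-1$ copies of $D_1-D_2$ and the bordered block $X$ with vanishing corner against them; the Schur complement of the non-base block $-2(I_{n-3}+J_{n-3})$ in $D_1-D_2$ is indeed $\mathcal{A}_2-\tfrac{2}{n-2}J_2$, so $\det(D_1-D_2)=(-2)^{n-3}(6-n)$; and in $\det(\mathcal{M}/W)=-r(2p+1)+2q^2$, with $p=2(b-1)-\tfrac{(3b-2)^2(n-3)}{D_W}$, $q=1-\tfrac{2(3b-2)(n-3)}{D_W}$, $r=-\tfrac{4(n-3)}{D_W}$, the $D_W^{-2}$ terms cancel and, since $D_W=2(n-3)(2b-1)-2$, one is left with $2+\tfrac{4(n-3)(1-2b)}{D_W}=-\tfrac{4}{D_W}$, whence $\det\mathcal{M}=(-1)^{n-1}2^{n-2}$. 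The final sign and power bookkeeping is also right (the exponents differ by the even number $2b$).

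Your route shares its opening move with the paper: the paper performs $R_1\to\sum_{i=1}^b R_i$ followed by $C_j\to C_j-C_1$, the transposed twin of your ``subtract block row $1$ from the others, then sum block columns into the first,'' so the symmetry-based block-triangularization into one aggregated block plus $b-1$ difference blocks is common to both. After that the proofs genuinely diverge. The paper keeps applying explicit elementary operations inside each block (its steps 2--4, plus the transposition $\sigma=(2,(n-1)b+1)$) until every diagonal block $\widetilde{D}_1$, $\widetilde{D}_2$ is lower triangular and the determinant can be read off the diagonal; this is entirely inversion-free but forces a three-way case split ($n=3$, $n=4$, $n\ge 5$) and leaves the factors as opaque products of diagonal entries. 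You instead stop at the block-triangular stage and evaluate $\det(D_1-D_2)$ and the bordered determinant $\det\mathcal{M}$ by Schur complements against the $aI+\beta J$ non-base blocks, using Lemma~\ref{lem:aI+bJ} and Remark~\ref{rem-new}; this treats all $n\ge 4$ uniformly (only $n=3$ needs a direct check), at the modest cost of the nonsingularity verifications you correctly flag ($\det\bigl(-2(I_{n-3}+J_{n-3})\bigr)=(-2)^{n-3}(n-2)\neq 0$ and $D_W\ge 4$ for $n\ge 4$, $b\ge 2$), and it exposes a structural fact the paper's computation hides: $\det\mathcal{M}=(-1)^{n-1}2^{n-2}$ is exactly the single-block determinant of Theorem~\ref{thm:det_Tn}, independent of $b$, which explains why substituting $b=1$ in the final formula recovers that theorem. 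Both arguments remain valid at $n=6$, yours because $D_1-D_2$ is never inverted, the paper's because it uses only elementary operations.
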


\begin{proof}
Our aim is to use elementary row or column operations to obtain the $\det D(T_n^{(b)})$. Let $\sigma = (2,(n-1)b+1)$ be the transposition and $P_{\sigma} = (p_{ij})$ be the corresponding permutation matrix. Let us label the block-rows and block-columns in \eqref{eqn:Det} by, $R_1,\cdots, R_b, R_{b+1}$ and $C_1,\cdots, C_b, C_{b+1}$, respectively. The algorithm is summarized in the steps below.

\begin{itemizeReduced}
\item[1.] First do $R_1 \to \sum_{i=1}^b R_i$, followed by $C_j \to C_j - C_1$ for $j = 2, \cdots b$.
\item[2.] For each $R_i$, $1\leq i \leq b$ add the second row to the first, and if $n > 3$ add rows $4, \cdots, n-1$ 
to the third row.
\item[3.] For each $C_j$, $1\leq j \leq b$ subtract the first column from the second, and if $n > 3$ subtract the third column from all the columns $4, \cdots, n-1$.
\item[4.] If $n>3$, for each $C_j$, $1\leq j \leq b$ subtract the third column from the first. Then add $4$ times the first and $2$ times the second column to the third column.
\item[5.] Rearrange the rows and columns by pre-multiplying and post-multiplying by the permutation matrix $P_{\sigma}$.
\end{itemizeReduced}

Note that in the above steps, the permutation matrix $P_{\sigma}$  is multiplied twice and will therefore not change the sign of the determinant. The resulting matrix obtained is given by

\begin{small}
\[
\widetilde{D}(T_n^{(b)})=
\left[
\begin{array}{c|c|c|c|c}
\widetilde{D}_1(T_n^{(b)}) & \mathbf{0} &\cdots &\mathbf{0} &\mathbf{0} \\
\hline
* & \widetilde{D}_2 (T_n^{(b)}) &\cdots &\mathbf{0} & \mathbf{0} \\
\hline
\vdots & \vdots & \ddots & \vdots & \vdots \\
\hline
* & * &\cdots &\widetilde{D}_2 (T_n^{(b)}) & \mathbf{0}\\
\hline
* & * &\cdots &* & -1 \\
\end{array}
\right]
\]
\end{small}
where, the matrices $\widetilde{D}_1(T_n^{(b)})$ and $\widetilde{D}_2 (T_n^{(b)})$ are of the following forms:

\begin{small}
\noindent $\underline{\textbf{Case 1:}} \, $($n = 3$)
\begin{equation*}
\widetilde{D}_1(T_3^{(b)})  = \left[
\begin{array}{cc}
4(b-1)+1 & 2b\\
1 & 0 \\
\end{array}
\right]  \mbox{ and }
\widetilde{D}_2(T_3^{(b)})  = \left[
\begin{array}{cc}
-3 & 0  \\
-1 & -1 \\
\end{array} \;
\right].
\end{equation*}

\noindent $\underline{\textbf{Case 2:}}\, $($n = 4$)

\begin{equation*}
\widetilde{D}_1(T_4^{(b)})  = \left[
\begin{array}{ccc}
4(b-1)+1 & 2b &2(3(b-1)+1)\\
1 & 0 & 2\\
(3b-2) & 2b & 4(b-1)\\
\end{array}
\right]  \mbox{ and }
\widetilde{D}_2(T_4^{(b)})  = \left[
\begin{array}{ccc}
1 & 0 &0 \\
1 & -1 & 0\\
2 & 0 & 4\\
\end{array} \;
\right].
\end{equation*}

\noindent $\underline{\textbf{Case 3:}}\, $($n \ge 5$)

\begin{eqnarray*}
  \widetilde{D}_1(T_n^{(b)}) &=&\left[
\begin{array}{ccc|c}
4(b-1)+1 & 2b &2(3(b-1)+1) &\\
1 & 0 & 2&\bigzero\\
(n-3)(3b-2) & 2b(n-3) & 4(n-3)(b-1)+2(n-4)&\\
\midrule
&*&&-2I_{n-4}
\end{array}
\right]  \mbox{ and }\\
  \widetilde{D}_2(T_n^{(b)}) &=& \left[
\begin{array}{ccc|c}
1 & 0 &0& \\
* & -1 & 0&\bigzero\\
* & * & -2(n-6)&\\
\midrule
\ & * &  &-2I_{n-4}
\end{array} \;
\right].
\end{eqnarray*}
\end{small}
The matrix $\widetilde{D}(T_n^{(b)})$ can be rewritten in the following  form
$ \left[
\begin{array}{c|c}
\widetilde{D}_1(T_n^{(b)}) & \bigzero \\
\hline
* & Y
\end{array}
\right], $
where $Y$ is a lower triangular matrix of the form
\begin{small}
\[
Y=
\left[
\begin{array}{c|c|c|c}
\widetilde{D}_2 (T_n^{(b)}) &\cdots &\mathbf{0} & \mathbf{0} \\
\hline
\vdots & \ddots & \vdots & \vdots \\
\hline
* &\cdots &\widetilde{D}_2 (T_n^{(b)}) & \mathbf{0}\\
\hline
* &\cdots &* & -1 \\
\end{array}
\right].
\]
\end{small}
Since each $\widetilde{D}_2 (T_n^{(b)})$ is a lower triangular matrix, using Proposition~\ref{prop:blockdet}, we have $\det D(T_n^{(b)})=\det \widetilde{D}_1(T_n^{(b)}) \times \det Y$ and hence the desired result follows.
\end{proof}

The above result shows that the $\det D(T_n^{(b)})$ depends only on the number of vertices in each block and the number of blocks. It can be seen by substituting $b=1$, the formula for $\det D(T_n)$ coincides with the one obtained in Theorem~\ref{thm:det_Tn}. Theorem~\ref{thm:det_Tn_b} implies that $\det D(T_n^{(b)})\neq 0$ when $n\neq 6$ and $b\geq 2$.  
To determine $D(T_n^{(b)})^{-1}$ when $n\neq 6$ and $b\geq 2$,  we first express the Laplacian matrix $L(T_n^{(b)})$ of $T_n^{(b)}$, which in block form is given by
\begin{equation}\label{eqn:LapT_nb}
L(T_n^{(b)})=
\left[
\begin{array}{c|c|c|c|c}
L_1        & \mathbf{0} &\cdots &\mathbf{0} &\textbf{l}_2 \\
\hline
\mathbf{0} & L_1        &\cdots &\mathbf{0} &\textbf{l}_2\\
\hline
\vdots     & \vdots     &\ddots & \vdots    & \vdots \\
\hline
\mathbf{0} & \mathbf{0} &\cdots &L_1        & \textbf{l}_2\\
\hline
\textbf{l}_2^t &\textbf{l}_2^t  &\cdots &\textbf{l}_2^t & 2b \\
\end{array}
\right],  \ \mbox{ where}
\end{equation}

\begin{small}
\noindent $\underline{\textbf{Case 1:}} \, $($n = 3$)
 $$L_1 =
\begin{array}{c}
2I_2- \mathcal{A}_2
\end{array},
\textbf{l}_2 =
\begin{array}{c}
-\mathds{1}_2\\
\end{array}$$

\noindent $\underline{\textbf{Case 2:}}\, $($n \ge 4$)

$$
L_1 = \left[
\begin{array}{c|c}
(n-1)I_2- \mathcal{A}_2             &-J_{2 \times (n-3)}\\
\hline
-J_{(n-3) \times 2}                 &2I_{n-3}\\
\end{array}
\right]  \mbox{ and }
\textbf{l}_2 = \left[
\begin{array}{c}
-\mathds{1}_2\\
\hline
\mathbf{0}_{(n-3) \times 1}\\
\end{array}
\right].
$$
\end{small}

Similar to the case of single blocks of $T_n$, we express the inverse of the distance matrix of $T_n^{(b)}$ in terms of the Laplacian matrix. Consider the matrix $\mathcal{R}(T_n^{(b)})$ in block matrix form:

\begin{small}
\begin{equation}\label{eqn:R_Tn_b}
\mathcal{R}(T_n^{(b)}) =
\left[
\begin{array}{c|c|c|c|c|c|c}
\mathcal{R}_1 & \mathcal{R}_2 & \mathcal{R}_2 &\cdots &\mathcal{R}_2 &\mathcal{R}_2 & \mathbf{r}_3 \\
\hline
\mathcal{R}_2 & \mathcal{R}_1 & \mathcal{R}_2 &\cdots &\mathcal{R}_2 &\mathcal{R}_2 & \mathbf{r}_3 \\
\hline
\mathcal{R}_2 & \mathcal{R}_2 & \mathcal{R}_1  &\cdots &\mathcal{R}_2 &\mathcal{R}_2 & \mathbf{r}_3 \\
\hline
\vdots & \vdots & \vdots & \ddots & \vdots & \vdots & \vdots\\
\hline
\mathcal{R}_2 & \mathcal{R}_2 & \mathcal{R}_2 &\cdots &\mathcal{R}_1 &\mathcal{R}_2 & \mathbf{r}_3 \\
\hline
\mathcal{R}_2 & \mathcal{R}_2 & \mathcal{R}_2 &\cdots  &\mathcal{R}_2 &\mathcal{R}_1 & \mathbf{r}_3 \\
\hline
\mathbf{r}_3^t & \mathbf{r}_3^t & \mathbf{r}_3^t &\cdots & \mathbf{r}_3^t & \mathbf{r}_3^t & r \\
\end{array}
\right], \ \mbox{ where}
\end{equation}

\noindent $\underline{\textbf{Case 1:}} \, $($n = 3$)
$$
\mathcal{R}_1 = -2(b-1)I_2+(b+2)\mathcal{A}_2,\
\mathcal{R}_2 = 2J_2,\
\mathbf{r}_3 = 3b \mathds{1}_2 \
\mbox{ and } \
r = -6(b-1)^2.
$$

\noindent $\underline{\textbf{Case 1:}} \, $($n \geq 4$)
\begin{eqnarray*}\label{eqn:R1,R2}
\mathcal{R}_1 &=& \left[
\begin{array}{c|c}
(n-5)(n-2)(b-1)I_2+(n-2)(b-(n-5))\mathcal{A}_2 & -((n-4)b-2)J_{2 \times (n-3)}\\
\hline
-((n-4)b-2)J_{(n-3) \times 2} & b(n-6)I_{n-3}+(b-(n-5))J_{n-3}
\end{array}
\right],\nonumber\\
\mathcal{R}_2 &=&
\left[
\begin{array}{c|c}
-(n-5)(n-2)J_2 & 2J_{2 \times (n-3)} \\
\hline
2J_{(n-3) \times 2} & -(n-5)J_{n-3}
\end{array}
\right], \;
\mathbf{r}_3 =  \left[
\begin{array}{c}
(n-6)\times [(n-4)b-(n-3)]\mathds{1}_2 \\
\hline
-b (n-6)\mathds{1}_{n-3}
\end{array}
\right] \mbox{and} \\
  r &=& -(n-5)(n-6)(b-1)^2.\nonumber
\end{eqnarray*}
\end{small}

\begin{theorem}\label{thm:invthm}
Let $n\neq 6$, $b\geq 2$ and  $D(T_n^{(b)})$ be the distance matrix of the graph $T_n^{(b)}$. Then,  the inverse of $D(T_n^{(b)})$ is given by

$$D(T_n^{(b)})^{-1} = -\frac{1}{2} L(T_n^{(b)}) + \frac{1}{2b}J + \frac{1}{2(n-6)b}\mathcal{R}(T_n^{(b)}),$$
where $L(T_n^{(b)})$ is the Laplacian matrix of $T_n^{(b)}$, $J$ is the matrix is of all ones with the conformal order and 
$\mathcal{R}(T_n^{(b)})$ is the matrix as defined in Eqn~(\ref{eqn:R_Tn_b}).
\end{theorem}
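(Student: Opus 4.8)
The plan is to verify the formula directly by showing that the proposed matrix, call it $M := -\frac{1}{2} L(T_n^{(b)}) + \frac{1}{2b}J + \frac{1}{2(n-6)b}\mathcal{R}(T_n^{(b)})$, satisfies $D(T_n^{(b)})\,M = I$. By Theorem~\ref{thm:det_Tn_b} the determinant of $D(T_n^{(b)})$ is nonzero whenever $n\neq 6$ and $b\geq 2$, so the inverse exists and is unique; hence establishing the single identity $D(T_n^{(b)})\,M = I$ suffices, and invertibility need not be argued separately. The crucial observation is that $D(T_n^{(b)})$, $L(T_n^{(b)})$, $J$ and $\mathcal{R}(T_n^{(b)})$ all share one block pattern: a $b\times b$ array of $(n-1)\times(n-1)$ blocks bordered by a row, a column and a corner coming from the central cut vertex, with one common value on the diagonal blocks and one common value off the diagonal. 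Consequently $M$ inherits this pattern, with diagonal block $M_1 = -\frac{1}{2}L_1 + \frac{1}{2b}J_{n-1} + \frac{1}{2(n-6)b}\mathcal{R}_1$, off-diagonal block $M_2 = \frac{1}{2b}J_{n-1} + \frac{1}{2(n-6)b}\mathcal{R}_2$, border column $\mathbf{m}_3 = -\frac{1}{2}\mathbf{l}_2 + \frac{1}{2b}\mathds{1}_{n-1} + \frac{1}{2(n-6)b}\mathbf{r}_3$ and corner $m = -b + \frac{1}{2b} + \frac{r}{2(n-6)b}$.

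Carrying out the block multiplication $D(T_n^{(b)})\,M$ and collecting the repeated off-diagonal contributions, the identity $D(T_n^{(b)})\,M = I$ collapses to the following five block-level equations:
\begin{align*}
D_1 M_1 + (b-1)D_2 M_2 + \mathbf{d}_3\mathbf{m}_3^t &= I_{n-1}, \\
D_1 M_2 + D_2 M_1 + (b-2)D_2 M_2 + \mathbf{d}_3\mathbf{m}_3^t &= \mathbf{0}, \\
\big(D_1 + (b-1)D_2\big)\mathbf{m}_3 + m\,\mathbf{d}_3 &= \mathbf{0}, \\
\mathbf{d}_3^t\big(M_1 + (b-1)M_2\big) &= \mathbf{0}^t, \\
b\,\mathbf{d}_3^t\mathbf{m}_3 &= 1.
\end{align*}
Since $D(T_n^{(b)})$ is symmetric and $M$ is built from the symmetric matrices $L(T_n^{(b)})$, $J$ and $\mathcal{R}(T_n^{(b)})$, we have $M^t = M$, so the fourth equation is the transpose of the third, which roughly halves the work. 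Subtracting the second identity from the first yields the clean reduction $(D_1 - D_2)(M_1 - M_2) = I_{n-1}$, in which the all-ones contributions cancel because $M_1 - M_2 = -\frac{1}{2}L_1 + \frac{1}{2(n-6)b}(\mathcal{R}_1 - \mathcal{R}_2)$; this isolates the ``difference'' part of the circulant-type block structure and handles the diagonal and generic off-diagonal blocks simultaneously.

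Each reduced identity lives on a single block and is verified by expanding the $2\times(n-3)$ substructure of $D_1, D_2, \mathcal{R}_1, \mathcal{R}_2, L_1$ into their $\mathcal{A}_2$, $I$ and $J$ pieces and simplifying with the elementary relations in Remark~\ref{rem-new} (namely $\mathcal{A}_2^2 = I_2$, $\mathcal{A}_2 J_{2\times s} = J_{2\times s}$, $\mathcal{A}_2 J_2 \mathcal{A}_2 = J_2$ and $J_{r\times t}J_{t\times s} = t\,J_{r\times s}$) together with Lemma~\ref{lem:aI+bJ} for the scalar coefficients that appear. I would treat the generic case $n\geq 5$ first, where the full $2+(n-3)$ split of each private block is present, and then record $n=3$ (where the $(n-3)$-part is empty, so only the $2\times 2$ base corner survives) and $n=4$ (where the $(n-3)$-part is a single vertex) as degenerate specializations of the same computation, matching the separate expressions for $D_1, D_2, L_1, \mathcal{R}_1, \mathcal{R}_2$ listed before the statement. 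The main obstacle is not conceptual but the volume and bookkeeping of these block products: each identity requires tracking several scalar coefficients (polynomials in $n$ and $b$) across the base--base, base--nonbase and nonbase--nonbase sub-blocks, and the delicate point is confirming that the factor $\frac{1}{2(n-6)b}$ multiplying $\mathcal{R}(T_n^{(b)})$ produces exactly the cancellations forcing the right-hand sides to be $I_{n-1}$, $\mathbf{0}$ and $1$. This is precisely where the hypothesis $n\neq 6$ is essential, since the $(n-6)$ appearing in the denominators of the coefficients of $\mathcal{R}$ must not vanish.
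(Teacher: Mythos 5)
Your overall strategy is exactly the paper's: form the candidate matrix $M$, exploit the common block pattern (diagonal block, off-diagonal block, border, corner), and verify $D(T_n^{(b)})M = I$ via precisely the five block identities you list --- these are the five cases of the paper's Eqn~(\ref{eqn:Y}), with its $X_1, X_2, \mathbf{x}_3, x$ renamed $M_1, M_2, \mathbf{m}_3, m$ (your corner value $m = -b + \tfrac{1}{2b} + \tfrac{r}{2(n-6)b}$ agrees with the paper's $x$), and the case split at $n=3$ and the use of Remark~\ref{rem-new} are likewise the same. However, one of your labor-saving claims is genuinely wrong: the fourth family of equations is \emph{not} the transpose of the third. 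Transposing $(D_1+(b-1)D_2)\mathbf{m}_3 + m\,\mathbf{d}_3 = \mathbf{0}$ yields $\mathbf{m}_3^t(D_1+(b-1)D_2) + m\,\mathbf{d}_3^t = \mathbf{0}^t$, which is the bottom border of $MD$, not of $DM$; the product $DM$ is not known to be symmetric until \emph{after} the proof is complete, so symmetry of $D$ and $M$ by itself cannot transfer one border to the other. If you verify only families $1,2,3,5$, all you can conclude is $DM = I + \mathbf{e}\,w^t$, where $\mathbf{e}$ is the standard basis vector at the cut vertex $c$ and $w^t\mathbf{e} = 0$; symmetry of $M = D^{-1} + (D^{-1}\mathbf{e})w^t$ then gives $(D^{-1}\mathbf{e})w^t = w(D^{-1}\mathbf{e})^t$, and applying this to $\mathbf{e}$ forces only $w\,(D^{-1})_{cc} = \mathbf{0}$. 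So closing the gap this way would require knowing the cofactor $(D^{-1})_{cc} \neq 0$, which is not available before the theorem is proved. The honest repair is to check $\mathbf{d}_3^t\bigl(M_1 + (b-1)M_2\bigr) = \mathbf{0}^t$ directly, which is exactly the paper's Step~3 and takes two lines.

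A second, smaller overstatement: subtracting the second block identity from the first does correctly give $(D_1 - D_2)(M_1 - M_2) = I_{n-1}$, but this is only one of two independent consequences of the pair, so it does not ``handle the diagonal and generic off-diagonal blocks simultaneously'' on its own. To recover both identities you must also verify a second combination, e.g.\ the sum $(D_1 + (b-1)D_2)\bigl(M_1 + (b-1)M_2\bigr) + b\,\mathbf{d}_3\mathbf{m}_3^t = I_{n-1}$ (which is the first identity plus $(b-1)$ times the second), or simply one of the two original equations as the paper does in its Steps~1--2. With these two repairs your plan coincides with the paper's proof; the remaining content is the same coefficient bookkeeping over the base/non-base sub-blocks, and your diagnosis of where $n \neq 6$ enters (the denominator in the coefficient of $\mathcal{R}(T_n^{(b)})$, with invertibility supplied by Theorem~\ref{thm:det_Tn_b}) is correct.
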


\begin{proof}
Let $$X = -\dfrac{1}{2} L(T_n^{(b)}) + \dfrac{1}{2b}J + \dfrac{1}{2(n-6)b} \ \mathcal{R}(T_n^{(b)}).$$ The block matrix 
form of $X$ is given by
\begin{small}
\[
X =
\left[
\begin{array}{c|c|c|c|c|c|c}
X_1 & X_2 & X_2 &\cdots &X_2 & X_2 & \mathbf{x}_3 \\
\hline
X_2 & X_1 & X_2 &\cdots &X_2 & X_2 & \mathbf{x}_3 \\
\hline
X_2 & X_2  & X_1 &\cdots &X_2 & X_2 & \mathbf{x}_3 \\
\hline
\vdots & \vdots & \vdots & \ddots & \vdots & \vdots & \vdots \\
\hline
X_2 & X_2 & X_2 &\cdots &X_1 & X_2  & \mathbf{x}_3 \\
\hline
X_2 & X_2 & X_2 &\cdots & X_2 &X_1 & \mathbf{x}_3 \\
\hline
\mathbf{x}_3^t & \mathbf{x}_3^t & \mathbf{x}_3^t &\cdots & \mathbf{x}_3^t & \mathbf{x}_3^t & x \\
\end{array}
\right],
\]
where,\\
\noindent $\underline{\textbf{Case 1:}} \, $($n = 3$)
$$
X_1 = -\dfrac{1}{6b}[(4b-1)J_2-6b\mathcal{A}_2],\
X_2 = \dfrac{1}{6b}J_2,\
\mathbf{x}_3 = \dfrac{1}{2b}\mathds{1}_2 \
\text{and}\
x = \dfrac{3-4b}{2b}.
$$
\noindent $\underline{\textbf{Case 2:}} \, $($n \geq 4$)
\begin{eqnarray*}
 X_1 &=& \dfrac{1}{2b(n-6)}\left[
\begin{array}{c|c}
(4b-(n-4)^2)J_2+2b(n-6)\mathcal{A}_2 & (n-4-2b)J_{2 \times (n-3)}\\
\hline
(n-4-2b) J_{(n-3) \times 2} & -(n-6)bI_{n-3}+(b-1)J_{n-3}
\end{array}
\right],\\
X_2
&=&  \dfrac{1}{2b(n-6)}
\left[
\begin{array}{c|c}
-(n-4)^2J_2 & (n-4)J_{2 \times (n-3)} \\
\hline
(n-4)J_{(n-3) \times 2} & -J_{n-3}
\end{array}
\right], \; \mathbf{x}_3 = \dfrac{1}{2b}
\left[
\begin{array}{c}
[b(n-3) - (n-4)]\mathds{1}_2 \\
\hline
-(b-1) \mathds{1}_{n-3}
\end{array}
\right]\mbox{ and }\\
  x &=&\dfrac{1}{2b}\left[ -n(b-1)^2 + (3b^2-10b+6) \right].
\end{eqnarray*}
\end{small}
Let $Y = D(T_n^{(b)})X = (Y_{ij})$ be a $(b+1)$-dimensional block matrix, where $Y_{ij}$ are block matrices of 
conformal order, given by
\begin{small}
\begin{equation}\label{eqn:Y}
Y_{ij}=
\begin{cases}
D_1X_1 +(b-1)D_2X_2 + \mathbf{d}_3\mathbf{x}_3^t, & \text{if} \ i = j,\ i=1,2,...,b,  \\
D_1X_2+D_2X_1 +(b-2)D_2X_2+\mathbf{d}_3\mathbf{x}_3^t, &  \text{if} \ i \neq j,\  i,j = 1,2,...,b, \\
\mathbf{d}_3^tX_1+(b-1)\mathbf{d}_3^tX_2, & \text{if} \ i=b+1,\ j=1,2,...,b,\\
D_1\mathbf{x}_3+(b-1)D_2\mathbf{x}_3+\mathbf{d}_3x, & \text{if} \ j=b+1,\ i=1,2,...,b,\\
b \mathbf{d}_3^t\mathbf{x}_3, & \text{if} \ i=j=b+1,
\end{cases}
\end{equation}
\end{small}
$D_1, D_2$ and $\mathbf{d}_3$ are defined as in Eqn~(\ref{eqn:R_Tn_b}).

Since the steps are similar for Case $1 (n = 3)$ and Case $2 (n \geq 4)$, we only discuss the latter case. For simplicity, we compute the above five cases of block matrices $Y_{ij}$ in five different steps.

\noindent {\textbf{Step 1 : }}$\underline{Y_{ij}, \mbox{ for } i = j,\ i=1,2,...,b}$
\begin{fleqn}
\begin{flalign*}
 \mbox{Note that}  D_1X_1  &= \dfrac{1}{2b(n-6)}  \left[
                                    \begin{array}{c|c}
                                       (n-3)(n-4-2b)J_2 + 2b(n-6)I_2     & (b-1)J_{2 \times (n-3)}\\
                                       \hline
                                       -2b(n-6)J_{(n-3) \times 2} & 2b(n-6)I_{n-3}
                                    \end{array}
                                 \right],\\
  D_2X_2  &= \dfrac{1}{2b(n-6)} \left[
                                   \begin{array}{c|c}
                                        -(n-4)(n-7)J_2                & (n-7)J_{2 \times (n-3)}\\
                                        \hline
                                       -2(n-4)(n-6)J_{(n-3) \times 2} & 2(n-6)J_{n-3}
                                    \end{array}
                                 \right] \mbox{ and } \\
 \mathbf{d}_3\mathbf{x}_3^t  &= \dfrac{1}{2b} \left[
                                                  \begin{array}{c|c}
                                                     [b(n-3)-(n-4)]J_2 & -(b-1)J_{2 \times (n-3)}\\
                                                     \hline
                                                     2[b(n-3)-(n-4)]J_{(n-3) \times 2} & -2(b-1)J_{n-3}
                                                  \end{array}
                                                \right].
\end{flalign*}
\end{fleqn}
Thus, for $ i=1,2,...,b$, and $i = j$,  we have
$Y_{ij} = D_1X_1+(b-1)D_2X_2+\mathbf{d}_3\mathbf{x}_3^t=I_{n-1}.$

\noindent {\textbf{Step 2 : }} $\underline{Y_{ij},  \mbox{ for } i \neq j,\  i,j = 1,2,...,b}$
\begin{fleqn}
\begin{flalign*}
 \mbox{Now, } D_1X_2 &= \dfrac{1}{2b(n-6)} \left[
\begin{array}{c|c}
(n-4)J_2 & -J_{2 \times (n-3)}\\
\hline
\textbf{0}_{(n-3) \times 2} & \textbf{0}_{n-3}
\end{array}
\right] \mbox{ and }  \\
D_2X_1 &= \dfrac{1}{2b(n-6)} \left[
\begin{array}{c|c}
[-2b(n-5)-(n-4)(n-7)]J_2 & (n+b-7)J_{2 \times (n-3)}\\
\hline
-2(n-6)(n-4+b)J_{(n-3) \times 2} & 2(n-6)J_{n-3}
\end{array}
\right].
\end{flalign*}
\end{fleqn}
Further, for $ i,j=1,2,...,b$, and $i\neq j$,  substituting $D_2X_2$ and $\mathbf{d}_3\mathbf{x}_3^t$ from Step~1, we get
$$Y_{i,j}=D_1X_2+D_2X_1 +(b-2)D_2X_2+\mathbf{d}_3\mathbf{x}_3^t = \textbf{0}_{n-1}.$$

\noindent {\textbf{Step 3 : }} $\underline{Y_{ij},  \mbox{ for }  i=b+1,\  j = 1,2,...,b}$\\

\noindent Note that $\mathbf{d}_3^tX_1 = \dfrac{(b-1)}{b(n-6)} \left[
\begin{array}{c|c}
-(n-4)\mathds{1}_2^t & \mathds{1}_{n-3}^t\\
\end{array}
\right]$, $\mathbf{d}_3^tX_2 = \dfrac{1}{b(n-6)} \left[
\begin{array}{c|c}
(n-4)\mathds{1}_2^t & -\mathds{1}_{n-3}^t\\
\end{array}
\right]$, and hence for $i=b+1,\  j = 1,2,...,b$, we have $Y_{ij}=\mathbf{d}_3^tX_1+(b-1)\mathbf{d}_3^tX_2 = \textbf{0}_{1 \times (n-1)}.$

\noindent {\textbf{Step 4 : }} $\underline{Y_{ij},  \mbox{ for }  j=b+1,\  i = 1,2,...,b}$\\

\noindent Since $D_1\mathbf{x}_3 = \dfrac{1}{2b} \left[
\begin{array}{c}
\mathds{1}_2\\
\hline
2b\mathds{1}_{n-3}
\end{array}
\right]$,  $\mathbf{d}_3x = \dfrac{1}{2b}\left[ -n(b-1)^2 + (3b^2-10b+6) \right]\left[
\begin{array}{c}
\mathds{1}_2 \\
\hline
2\mathds{1}_{n-3}
\end{array}
\right]$,\\
and $D_2\mathbf{x}_3 = \dfrac{1}{2b} \left[
\begin{array}{c}
[b(n-3)-(n-7)]\mathds{1}_2\\
\hline
[2b(n-3)-2(n-6)]\mathds{1}_{n-3}
\end{array}
\right]$, we have for $j=b+1,\  i = 1,2,...,b,$ 

$$Y_{ij}=D_1\mathbf{x}_3+(b-1)D_2\mathbf{x}_3+\mathbf{d}_3x = \textbf{0}_{(n-1) \times 1}.$$

\noindent {\textbf{Step 5: }} $\underline{Y_{ij},  \mbox{ for } i= j=b+1}$\\

\noindent Since,  $ \mathbf{d}_3^t \mathbf{x}_3 = \dfrac{1}{2b} \left[
\begin{array}{c|c}
\mathds{1}_2^t & 2\mathds{1}_{n-3}^t\\
\end{array}
\right]
\left[
\begin{array}{c}
[b(n-3) - (n-4)]\mathds{1}_2 \\
\hline
-(b-1) \mathds{1}_{n-3}
\end{array}
\right]=\dfrac{1}{b}$ , so
$Y_{ij}=b\mathbf{d}_3^t\mathbf{x}_3 = 1,  \mbox{ for }  i= j=b+1.$

From the above fives steps and Eqn~(\ref{eqn:Y}), we get $Y=I$, the identity matrix, and hence the desired result follows.
\end{proof}


In the next section, we study eigenvalues of some principal submatrices of the matrix $\mathcal{R}(T_n^{(b)})$ defined in Eqn~(\ref{eqn:R_Tn_b}).

\section{Some properties of the matrix $\mathcal{R}(T_n^{(b)})$}\label{sec:Eigen}

In this section, we will study spectral properties of matrices related to $\mathcal{R}(T_n^{(b)})$. We compute the eigenvalues of certain principal submatrices of $\mathcal{R}(T_n^{(b)})$. To the best of our knowledge, this has not been attempted so far, although our aim is not to compare the spectral properties of $\mathcal{R}(T_n^{(b)})$ through this method. For simplicity, we will write $\mathcal{R}$ instead of $\mathcal{R}(T_n^{(b)})$. Before proceeding further we first fix a few notions which will appear subsequently. Recall that, the vertex set of $T_n^{(b)}$ is given by
\begin{flalign}\label{eqn:vertex}
 V(T_n^{(b)})=   & \{(k-1)(n-1)+i : k=1,2,\ldots, b  \mbox{ and } i=1,2,\ldots, n-1 \} \cup \{ b(n-1)+1\}.&&
\end{flalign}

Let $c$ denote the central cut vertex; that is, $c=b(n-1)+1$. 
Note that, the vertex set $V(T_n^{(b)})$ can be partitioned  into  $B$, $N$ and $\{c\}$,
where the set $B$ is the set of base vertices of each block
$T_n^{(b)}$  and $N$ is the set of non-base vertices of each block   $T_n^{(b)}$ excluding the central 
cut vertex.  For $n \ge 3$ and $b\ge 2$,
\begin{eqnarray*}
B&= & \{(k-1)(n-1)+i : k=1,2,\ldots, b  \mbox{ and } i=1,2 \},  \nonumber \\
N           & = &   \{(k-1)(n-1)+i : k=1,2,\ldots, b  \mbox{ and } i=3,4,\ldots, n-1 \}.
\end{eqnarray*}

\begin{ex}
From Figure~\ref{fig:T-n-b}, it can be seen \textup{(i)} For $T_3^{(4)}$, $B = \{1,2,3,4,5,6,7,8\}$ and $N$ is the empty set. \textup{(ii)} For $T_5^{(2)}$, $B =  \{1,2,5,6\}$ and $N = \{3,4,7,8\}$. \textup{(iii)} For $T_6^{(3)}$, $B = \{1,2,6,7,11,12\}$ and $N = \{3,4,5,8,9,10,13,14,15\}$.
\end{ex}

In what follows, we find the eigenvalues of the matrices $\mathcal{R}[B]$, $\mathcal{R}[N]$ and 
$\mathcal{R}[N\cup \{c\}]$, respectively,  where $\mathcal{R}[B]$, $\mathcal{R}[N]$ and 
$\mathcal{R}[N\cup \{c\}]$ are the principal submatrices of $\mathcal{R}$ corresponding the vertices coming from $B$, $N$ and $N\cup \{c\}$, respectively. We begin by computing the eigenvalues of $\mathcal{R}[B]$.

\begin{theorem}\label{Thm:S}
Let $n\ge 3$ and  $\mathcal{R}[B]$ be the principal submatrix of $\mathcal{R}$ corresponding to the 
base vertices $B$. The eigenvalues of $\mathcal{R}[B]$ are given by
\begin{small}
\[
\begin{array}{|c|c|}
\hline
\text{Eigenvalues} & \text{Multiplicity} \\
\hline
-(n-2)(n-6)b & 1 \\
\hline
(n-2)(n-6)b & b \\
\hline
(n-2)(n-4)b &(b-1)  \\
\hline
\end{array}
\]
\end{small}
\end{theorem}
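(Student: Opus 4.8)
The plan is to exploit the block structure of $\mathcal{R}[B]$. Restricting $\mathcal{R}(T_n^{(b)})$ to the base vertices $B$ extracts, from each copy of $\mathcal{R}_1$ and $\mathcal{R}_2$ in Eqn~(\ref{eqn:R_Tn_b}), only the top-left $2 \times 2$ corner (the base-vertex block of each). Writing $S_1$ for the base-vertex block of $\mathcal{R}_1$ and $S_2$ for that of $\mathcal{R}_2$, namely
$$S_1 = (n-5)(n-2)(b-1)I_2 + (n-2)(b-(n-5))\mathcal{A}_2, \qquad S_2 = -(n-5)(n-2)J_2,$$
the matrix $\mathcal{R}[B]$ is the $b \times b$ block matrix all of whose diagonal blocks equal $S_1$ and all of whose off-diagonal blocks equal $S_2$. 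For $n=3$ the set $N$ is empty, $B$ is the whole vertex set of each block, and these same $S_1,S_2$ are read directly off the $n=3$ data, so the two cases are treated uniformly.

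First I would write $\mathcal{R}[B] = I_b \otimes (S_1 - S_2) + J_b \otimes S_2$ (here $w \otimes v$ denotes the vector whose $k$-th $2$-block is $w_k v$) and diagonalize the scalar factor $J_b$, whose eigenvalues are $b$ on the line spanned by $\mathds{1}_b$ and $0$ on its $(b-1)$-dimensional orthogonal complement. Feeding an eigenvector $w \otimes v$ of $J_b$ into $\mathcal{R}[B]$ gives $w \otimes \big((S_1 + (\lambda-1)S_2)v\big)$ when $J_b w = \lambda w$, so the spectrum of $\mathcal{R}[B]$ is exactly the union of the spectrum of $S_1 + (b-1)S_2$ (from $w = \mathds{1}_b$, each eigenvalue of multiplicity $1$) and the spectrum of $S_1 - S_2$ (from the $(b-1)$-dimensional eigenspace $\mathds{1}_b^{\perp}$, each eigenvalue of multiplicity $b-1$). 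This reduces a $2b \times 2b$ eigenproblem to two $2 \times 2$ eigenproblems.

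The remaining step is a direct computation of these two $2 \times 2$ spectra, using $J_2 = I_2 + \mathcal{A}_2$ to express everything in the basis $\{I_2, \mathcal{A}_2\}$. I expect the $I_2$-coefficient in $S_1 + (b-1)S_2$ to cancel, leaving $-(n-2)(n-6)b\,\mathcal{A}_2$, whose eigenvalues are $\mp(n-2)(n-6)b$ since $\mathcal{A}_2$ has eigenvalues $\pm 1$; and $S_1 - S_2$ to collapse to $(n-2)b\,[(n-5)I_2 + \mathcal{A}_2]$, with eigenvalues $(n-2)(n-4)b$ and $(n-2)(n-6)b$. Assembling the two lists with their multiplicities then yields $-(n-2)(n-6)b$ with multiplicity $1$, the value $(n-2)(n-6)b$ with multiplicity $1 + (b-1) = b$, and $(n-2)(n-4)b$ with multiplicity $b-1$, matching the table.

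The only genuine obstacle is bookkeeping. One must verify that the base-vertex restriction picks out the claimed $2 \times 2$ corners uniformly across all diagonal and off-diagonal blocks, so that the decomposition $\mathcal{R}[B] = I_b \otimes (S_1 - S_2) + J_b \otimes S_2$ is legitimate, and then carry out the $\{I_2,\mathcal{A}_2\}$-algebra in $S_1 \pm (\cdots)S_2$ without sign errors. The conceptual content — reducing the whole problem to the two small matrices via the spectral structure of $J_b$ — is precisely what makes the multiplicities $1$, $b$, and $b-1$ transparent.
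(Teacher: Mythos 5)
Your proposal is correct, and it takes a genuinely different route from the paper. The paper works directly with $\mathcal{B} = \mathcal{R}[B] - xI$ and applies elementary block row/column operations (first $R_1 \to \sum_{i=1}^b R_i$ and $C_j \to C_j - C_1$, then intra-block operations) to reach a block lower-triangular matrix whose diagonal entries are read off to give the characteristic polynomial $\chi_{\mathcal{R}[B]}(x) = (x+(n-2)(n-6)b)(x-(n-2)(n-6)b)^b(x-(n-2)(n-4)b)^{b-1}$ — the same elimination style used for $\det D(T_n^{(b)})$ in Theorem~3.1. You instead exploit the decomposition $\mathcal{R}[B] = I_b \otimes (S_1 - S_2) + J_b \otimes S_2$ and the spectral decomposition of $J_b$, reducing everything to the two $2\times 2$ matrices $S_1 + (b-1)S_2$ and $S_1 - S_2$. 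I verified your algebra: restricting to $B$ does pick out the top-left $2\times 2$ corners uniformly (the cut vertex $c \notin B$, so $\mathbf{r}_3$ and $r$ drop out), the general formulas for $S_1, S_2$ specialize at $n=3$ to exactly the paper's $n=3$ data ($-2(b-1)I_2+(b+2)\mathcal{A}_2$ and $2J_2$), and with $J_2 = I_2 + \mathcal{A}_2$ one gets $S_1 + (b-1)S_2 = -(n-2)(n-6)b\,\mathcal{A}_2$ and $S_1 - S_2 = (n-2)b\left[(n-5)I_2 + \mathcal{A}_2\right]$, yielding the claimed eigenvalues and multiplicities $1$, $1+(b-1)=b$, and $b-1$. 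Your approach buys several things the paper's does not: the multiplicities are structurally transparent (multiplicity $b-1$ visibly comes from $\mathds{1}_b^{\perp}$), you get eigenvectors for free, and the argument generalizes verbatim to any matrix of the form $I_b \otimes X + J_b \otimes Y$; the paper's approach buys self-containedness and uniformity with its other determinant computations, requiring nothing beyond row/column operations. One tiny point worth making explicit in a final write-up: the union-of-spectra claim with multiplicities needs either the observation that conjugation by $Q \otimes I_2$ (where $Q$ diagonalizes $J_b$) block-diagonalizes $\mathcal{R}[B]$, or the remark that all matrices involved are real symmetric so the $w \otimes v$ eigenvectors form a complete basis — either remark closes this in one line.
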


\begin{proof}
Let $\mathcal{B} = \mathcal{R}[B] -xI = (s_{ij})$.  Let us label the block-rows and block-columns of the matrix $\mathcal{R}[B]$ by, $R_1,\cdots, R_b$ and $C_1,\cdots, C_b$, respectively. The following algorithm helps in determining the characteristics polynomial of $\mathcal{B}$.

\begin{itemizeReduced}
\item[1.] First do $R_1 \to \sum_{i=1}^b R_i$, followed by $C_j \to C_j - C_1$ for $j = 2, \cdots, b$.
\item[2.] For each of the block-rows, add the first to the second; for each of the block-columns, add the second 
one to the first one.
\end{itemizeReduced}

After applying the above steps to $\mathcal{B}$, we get a matrix $\widetilde{\mathcal{B}}$ in the 
following block form:
\begin{small}
\[
\widetilde{\mathcal{B}} =
\left[
\begin{array}{c|c|c|c}
\mathcal{B}_1 & \mathbf{0} &\cdots &\mathbf{0}  \\
\hline
* & \mathcal{B}_2 &\cdots &\mathbf{0}  \\
\hline
\vdots & \vdots & \ddots & \vdots  \\
\hline
* & * &\cdots &\mathcal{B}_2 \\
\end{array}
\right], \ \mbox{ where}
\]
\end{small}

\begin{small}
\begin{align*}
 \quad \mathcal{B}_1 = & \left[
\begin{array}{c|c}
-x-(n-2)(n-6)b & 0 \\
\midrule
* & -x+(n-2)(n-6)b \\
\end{array}
\right] \mbox{ and }\\
 \mathcal{B}_2 = & \left[
\begin{array}{c|c}
-x+(n-2)(n-4)b & 0\\
\midrule
* & -x+(n-2)(n-6)b\\
\end{array}
\right].&&
\end{align*}
\end{small}

Since both $\mathcal{B}_1  \ \& \ \mathcal{B}_2$ are lower triangular matrices, so is the matrix 
$\widetilde{\mathcal{B}}$. Therefore, the characteristic polynomial of $\mathcal{R}[B]$ equals 
$\det \mathcal{B} = \det \widetilde{\mathcal{B}}$ and is given by
$$\chi_{\mathcal{R}[B]}(x)=  (x+(n-2)(n-6)b)(x-(n-2)(n-6)b)^b(x-(n-2)(n-4)b)^{b-1}.$$ The desired result 
follows from this.
\end{proof}

We conclude by computing the eigenvalues of the principal submatrix $\mathcal{R}[N]$.

\begin{theorem}\label{thm:ev-N-n-ge-4}
Let $n \ge 4$ and $\mathcal{R}[N]$ be the principal submatrix of $\mathcal{R}$ corresponding the vertex 
set $N$. Then, the eigenvalues of $\mathcal{R}[N]$ are:
\begin{small}
\[
\begin{array}{|c|c|}
\hline
\text{Eigenvalues} & \text{Multiplicity} \\
\hline
-(n-4)(n-6)b & 1 \\
\hline
(2n-9)b & (b-1) \\
\hline
(n-6)b &(n-4)b  \\
\hline
\end{array}
\]
\end{small}
\end{theorem}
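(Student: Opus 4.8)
The plan is to mimic the elementary row/column operation argument used in the proof of Theorem~\ref{Thm:S}. First I would record the block structure of $\mathcal{R}[N]$: since $N$ consists of the $n-3$ non-base vertices in each of the $b$ blocks, $\mathcal{R}[N]$ is the $b\times b$ block matrix whose $b$ diagonal blocks all equal the non-base principal block $P := b(n-6)I_{n-3}+(b-(n-5))J_{n-3}$ of $\mathcal{R}_1$ and whose off-diagonal blocks all equal the non-base block $Q := -(n-5)J_{n-3}$ of $\mathcal{R}_2$, both read off directly from Eqn~(\ref{eqn:R_Tn_b}).

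Next, setting $\mathcal{N} = \mathcal{R}[N]-xI$ and labelling its block-rows and block-columns $R_1,\dots,R_b$ and $C_1,\dots,C_b$, I would apply the same two operations as in Theorem~\ref{Thm:S}: first $R_1\to\sum_{i=1}^{b}R_i$, and then $C_j\to C_j-C_1$ for $j=2,\dots,b$. The first operation replaces every block-entry of the first block-row by $S:=(P-xI)+(b-1)Q$; the second annihilates every block of the first block-row except the $(1,1)$ block (which stays $S$), and in each block-row $i\ge 2$ leaves $Q$ in column $1$, turns the diagonal block into $(P-xI)-Q$, and zeroes out the remaining blocks. The result is a block lower-triangular matrix with first diagonal block $S$ and the remaining $b-1$ diagonal blocks all equal to $(P-xI)-Q$.

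The key algebraic simplification is that both diagonal blocks collapse to matrices of the form $aI_{n-3}+cJ_{n-3}$. Indeed the $J$-coefficients telescope, $b-(n-5)-(b-1)(n-5)=b(6-n)$, giving $P+(b-1)Q = b(n-6)(I_{n-3}-J_{n-3})$, so that $S = (b(n-6)-x)I_{n-3}-b(n-6)J_{n-3}$; likewise $P-Q = b(n-6)I_{n-3}+bJ_{n-3}$, so $(P-xI)-Q=(b(n-6)-x)I_{n-3}+bJ_{n-3}$. Applying Lemma~\ref{lem:aI+bJ} to each yields $\det S = (b(n-6)-x)^{n-4}\big(-(n-4)(n-6)b-x\big)$ and $\det\big((P-xI)-Q\big)=(b(n-6)-x)^{n-4}\big((2n-9)b-x\big)$.

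Finally, Proposition~\ref{prop:blockdet} gives the characteristic polynomial as $\chi_{\mathcal{R}[N]}(x)=\det S\cdot\big(\det((P-xI)-Q)\big)^{b-1}$, and reading off its roots with multiplicities yields $-(n-4)(n-6)b$ with multiplicity $1$, the value $(2n-9)b$ with multiplicity $b-1$, and $(n-6)b$ with multiplicity $(n-4)+(n-4)(b-1)=(n-4)b$, exactly as claimed. The only delicate point is the telescoping cancellation producing $b(n-6)(I_{n-3}-J_{n-3})$ for $S$; once that identity is in hand, everything reduces to Lemma~\ref{lem:aI+bJ} together with the block-triangular determinant.
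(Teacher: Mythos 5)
Your proposal is correct and is essentially the paper's own argument: the paper's proof begins with exactly the same block operations ($R_1 \to \sum_{i=1}^{b} R_i$, then $C_j \to C_j - C_1$ for $j\ge 2$), producing a block lower-triangular matrix whose diagonal blocks are your $S=(P-xI)+(b-1)Q$ (once) and $(P-xI)-Q$ ($b-1$ times). The only difference is in the finish: the paper triangularizes these $aI_{n-3}+cJ_{n-3}$ blocks by further in-block row/column operations and a permutation $P_{\sigma}$, whereas you read off their determinants directly from Lemma~\ref{lem:aI+bJ} --- a harmless, slightly cleaner shortcut, and your algebra (including the key identity $P+(b-1)Q=b(n-6)(I_{n-3}-J_{n-3})$ and the degenerate case $n=4$) checks out.
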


\begin{proof}
For $n \ge 4$,  $\mathcal{R}[N]$ is an $m \times m $ matrix, where $m= b(n-3)$.  
Let $\widetilde{\mathcal{N}} = \mathcal{R}[N] -xI = (t_{ij})$ and consider the permutation matrix $P_{\sigma}$ 
corresponding to the permutation $$\sigma= \prod_{k=1}^{b} ((k-1)(n-3)+1,k(n-3)).$$ Let us label the block-rows 
and block-columns of the matrix $\mathcal{R}[N]$ by $R_1,\cdots, R_b$ and $C_1,\cdots, C_b$, respectively.  We provide the following steps to get a suitable form of the matrix to calculate the eigenvalues.

\begin{itemizeReduced}
\item[1.] First do $R_1 \to \sum_{i=1}^b R_i$, followed by $C_j \to C_j - C_1$ for $j = 2, \cdots b$.
\item[2.] If $n>4$ subtract the last column in each of block-columns from the remaining columns.
\item[3.] For each block-rows add all the rows to the last row.
\item[4.] Rearrange the rows and columns by pre-multiplying and post-multiplying by the permutation matrix $P_{\sigma}$.
\end{itemizeReduced}

Proceeding with the above steps, the resulting matrix $\widetilde{\mathcal{N}}$ has the following block matrix form
\begin{small}
 \[
\widetilde{\mathcal{N}} = \left[
\begin{array}{c|c|c|c}
\widetilde{\mathcal{N}}_1 & \mathbf{0}                &\cdots &\mathbf{0} \\
\hline
*                         & \widetilde{\mathcal{N}}_2 &\cdots &\mathbf{0}\\
\hline
\vdots                    & \vdots                    & \ddots & \vdots \\
\hline
*                         & *                         &\cdots &\widetilde{\mathcal{N}}_2\\
\end{array}
\right], \ \mbox{where}
\] 
\end{small}

\noindent $\underline{\textbf{Case 1:}} \, $($n = 4$) $\widetilde{\mathcal{N}}_1= -x \mbox{ and }\widetilde{\mathcal{N}}_2= -(x+b).$

\noindent $\underline{\textbf{Case 2:}} \, $($n \ge 5$)

\begin{align*}
\widetilde{\mathcal{N}}_1 &= \left[
\begin{array}{c|c}
-x-(n-4)(n-6)b  & \mathbf{0} \\
\hline
*               & (-x+(n-6)b)I_{n-4}\\
\end{array}
\right] \mbox{ and } \\
\widetilde{\mathcal{N}}_2 &=\left[
\begin{array}{c|c}
-x+(2n-9)b & \mathbf{0} \\
\hline
 *         &(-x+(n-6)b)I_{n-4}  \\
\end{array}
\right].
\end{align*}

It is now easy to compute the characteristic polynomial (and hence its roots) of $\mathcal{R}[N]$. 
\end{proof}

\begin{theorem}\label{thm:ev-N-c-n-ge-4}
Let $n \ge 4$ and $\mathcal{R}[N\cup \{c\}]$ be the principal submatrix of $\mathcal{R}$ corresponding the 
vertex set $N\cup \{c\}$. The eigenvalues of $\mathcal{R}[N \cup \{c\}]$ are $(2n-9)b$ with multiplicity $(b-1), 
\ (n-6)b$ with multiplicity $(n-4)b$ and the remaining two eigenvalues are the solutions to the quadratic 
polynomial $p(x)$, where
\begin{equation*}
p(x)=x^2 + (n-6)\left[(n-5)(b-1)^2+(n-4)b\right]x +
b(n-6)^2\left[(n-4)(n-5)(b-1)^2 - (n-3)b^2\right].
\end{equation*}
\end{theorem}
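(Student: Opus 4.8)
The plan is to exploit the bordered structure of $\mathcal{R}[N\cup\{c\}]$ together with the eigenstructure of $\mathcal{R}[N]$ already obtained in Theorem~\ref{thm:ev-N-n-ge-4}. Writing $m=b(n-3)$ and ordering the vertices so that those of $N$ come first and the cut vertex $c$ last, the coupling between $N$ and $c$ is given by the $N$-part of $\mathbf{r}_3$ repeated across the $b$ blocks. Since that $N$-part equals $-b(n-6)\mathds{1}_{n-3}$, the full coupling vector is $\mathbf{v}=-b(n-6)\mathds{1}_m$, a scalar multiple of the all-ones vector. Thus
\[
\mathcal{R}[N\cup\{c\}]=\left[\begin{array}{c|c}\mathcal{R}[N] & \mathbf{v}\\ \hline \mathbf{v}^t & r\end{array}\right],\qquad \mathbf{v}=-b(n-6)\mathds{1}_m,\quad r=-(n-5)(n-6)(b-1)^2.
\]
The single fact that $\mathbf{v}$ is parallel to $\mathds{1}_m$ drives the whole argument.

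First I would record that $\mathds{1}_m$ is an eigenvector of $\mathcal{R}[N]$ for the eigenvalue $-(n-4)(n-6)b$: a one-line row-sum computation on the diagonal blocks $b(n-6)I_{n-3}+(b-(n-5))J_{n-3}$ and the off-diagonal blocks $-(n-5)J_{n-3}$ yields $\mathcal{R}[N]\mathds{1}_m=-(n-4)(n-6)b\,\mathds{1}_m$. Since $\mathcal{R}[N]$ is symmetric and, by Theorem~\ref{thm:ev-N-n-ge-4}, this eigenvalue has multiplicity one, the orthogonal complement $W=\mathds{1}_m^{\perp}$ is $\mathcal{R}[N]$-invariant, carries exactly the eigenvalues $(2n-9)b$ and $(n-6)b$ with multiplicities $(b-1)$ and $(n-4)b$, and every eigenvector $\mathbf{u}\in W$ satisfies $\mathbf{v}^t\mathbf{u}=-b(n-6)\mathds{1}_m^t\mathbf{u}=0$. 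For any such $\mathbf{u}$ the padded vector $(\mathbf{u},0)^t$ is an eigenvector of $\mathcal{R}[N\cup\{c\}]$ with the same eigenvalue, because the bottom entry of the product is precisely $\mathbf{v}^t\mathbf{u}=0$. This lifts $m-1$ eigenvalues unchanged, accounting for $(2n-9)b$ with multiplicity $(b-1)$ and $(n-6)b$ with multiplicity $(n-4)b$.

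The remaining two eigenvalues live on the complementary invariant subspace spanned by $f_1=(\mathds{1}_m,0)^t$ and $f_2=(\mathbf{0},1)^t$, the coordinate direction of $c$. Using $\mathcal{R}[N]\mathds{1}_m=-(n-4)(n-6)b\,\mathds{1}_m$, $\mathbf{v}^t\mathds{1}_m=-b^2(n-6)(n-3)$, and $\mathcal{R}[N\cup\{c\}]f_2=(\mathbf{v},r)^t$, the matrix of the restriction in the basis $\{f_1,f_2\}$ is
\[
M=\left[\begin{array}{cc}-(n-4)(n-6)b & -b(n-6)\\ -b^2(n-6)(n-3) & -(n-5)(n-6)(b-1)^2\end{array}\right].
\]
I would then expand $\det(M-xI)$ and verify it equals $p(x)$: the linear coefficient collapses to $(n-6)\big[(n-5)(b-1)^2+(n-4)b\big]$ and, after combining the off-diagonal product $b^3(n-6)^2(n-3)$ with $-\det(\text{diagonal part})$, the constant term collapses to $b(n-6)^2\big[(n-4)(n-5)(b-1)^2-(n-3)b^2\big]$. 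The only genuine computation is this $2\times 2$ expansion, and the sole point requiring care is the sign bookkeeping that makes the cross term $b^3(n-6)^2(n-3)$ merge correctly into the stated constant term of $p(x)$; everything else is forced by the symmetry of $\mathcal{R}[N]$ and the rank-one character of the border $\mathbf{v}=-b(n-6)\mathds{1}_m$.
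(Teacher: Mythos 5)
Your proof is correct, but it takes a genuinely different route from the paper's. The paper never isolates the bordered structure: it simply re-runs the elimination algorithm of Theorem~\ref{thm:ev-N-n-ge-4} on $\mathcal{R}[N\cup\{c\}]-xI$ (the same block row/column operations, with the permutation $\sigma$ extended to handle the cut vertex) to reach a block lower triangular matrix and reads the characteristic polynomial, including the quadratic factor $p(x)$, off the diagonal blocks. You instead treat Theorem~\ref{thm:ev-N-n-ge-4} as a black box and exploit the fact that the border $\mathbf{v}=-b(n-6)\mathds{1}_m$ is parallel to $\mathds{1}_m$, which is itself an eigenvector of $\mathcal{R}[N]$ (the row sums do collapse to $-(n-4)(n-6)b$, as I verified from the blocks $b(n-6)I_{n-3}+(b-(n-5))J_{n-3}$ and $-(n-5)J_{n-3}$): eigenvectors in $\mathds{1}_m^{\perp}$ pad by a zero to eigenvectors of the bordered matrix, and the complementary two-dimensional invariant subspace produces your $2\times 2$ matrix $M$, whose trace gives $-(n-6)\left[(n-5)(b-1)^2+(n-4)b\right]$ and whose determinant gives $b(n-6)^2\left[(n-4)(n-5)(b-1)^2-(n-3)b^2\right]$, matching $p(x)$ exactly. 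Your route buys conceptual insight — it explains \emph{why} the families $(2n-9)b$ and $(n-6)b$ survive the bordering untouched and only two eigenvalues move — while the paper's buys uniformity with its earlier elimination-based computations and independence from the statement of Theorem~\ref{thm:ev-N-n-ge-4}. One small repair: your parenthetical claim that the eigenvalue $-(n-4)(n-6)b$ has multiplicity one fails in degenerate cases (for $n=5$ it coincides with $(2n-9)b=b$; for $n=6$ it coincides with $(n-6)b=0$), but you never actually need it: symmetry of $\mathcal{R}[N]$ together with $\mathds{1}_m$ being an eigenvector already makes $W=\mathds{1}_m^{\perp}$ invariant, and the spectrum of the restriction to $W$ is the full multiset of Theorem~\ref{thm:ev-N-n-ge-4} minus one copy of $-(n-4)(n-6)b$, which is all your lifting argument uses.
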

\begin{proof}
Note that, $\mathcal{R}[N\cup \{c\}]$ is an $m \times m$ matrix, where $m=b(n-3)+1$.  Let $\widetilde{\mathcal{N}} = \mathcal{R}[N \cup \{c\}] -xI$.  Applying  the algorithm in Theorem~\ref{thm:ev-N-n-ge-4} and  the permutation 
\begin{equation*}
\sigma=\begin{cases} 
(2,b+1), & \text{if } n=4,\\
(2,b(n-3)+1)\prod_{k=1}^{b} ((k-1)(n-3)+1,k(n-3)), & \text{if } n\geq 5,
\end{cases}
\end{equation*}
we get the desired result.
\end{proof}

%


\begin{small}

\end{small}

\end{document}